\documentclass[11pt]{article}
\usepackage{amsfonts,amsbsy,amssymb,amsmath,graphicx}
\usepackage{geometry}
\setlength{\oddsidemargin}{0in}
\setlength{\evensidemargin}{0in}
\setlength{\topmargin}{-.5in}
\setlength{\textwidth}{6.5in}
\setlength{\textheight}{8.5in}
\usepackage{color}
\usepackage[colorlinks=true,linkcolor=red,citecolor=blue]{hyperref}
\usepackage{bbm}
\usepackage{natbib}
\usepackage{verbatim}
\newtheorem{propo}{Proposition}[section]
\newenvironment{proof} {\noindent {\em \textbf{Proof}} } { \hfill \fbox{~} \\ }

\def\1{1\kern-.20em {\rm l}}
\newcommand{\ER}{\ensuremath{\mathbb{R}}}

\newtheorem{theorem}{Theorem}[section]

\newtheorem{corollary}[theorem]{Corollary}

\newtheorem{lemma}[theorem]{Lemma}

\newtheorem{remark}[theorem]{Remark}


\newcommand{\argmin}{\mathop{\mathrm{arg\,min}}}

\begin{document}

\title {Nonparametric Multivariate $L_1$-median Regression Estimation with Functional Covariates}

\author{{\sc Mohamed Chaouch}$^{1,}\footnote{corresponding author}$ \quad and \quad {\sc Na\^amane La\"ib}$^2$ \\
$^1$ Centre for the Mathematics of Human Behaviour (CMoHB)\\
Department of Mathematics and
Statistics, University of Reading, UK\\
$^2$Laboratoire de Statistique Th\'eorique et Appliqu\'ee, Universit\'e Paris 6, France\\
email : $m.chaouch$@reading.ac.uk, $naamane.laib$@upmc.fr}

\maketitle

\begin{abstract}

In this paper, a nonparametric estimator is proposed for
estimating the $L_1$-median for multivariate conditional
distribution when the covariates take values in an {\it infinite}
dimensional space. The multivariate case is more
appropriate to predict the components of a vector of random
variables simultaneously rather than predicting each of them
separately. While estimating the conditional $L_1$-median function
using the well-known Nadarya-Waston estimator, we establish the
strong consistency of this estimator as well as the asymptotic
normality. We also present some simulations and provide how to
built conditional confidence ellipsoids for the multivariate
$L_1$-median regression in practice. Some numerical study in chemiometrical real data are carried out to compare the multivariate $L_1$-median regression with the vector of marginal median regression when the covariate $X$ is a curve as well as $X$ is a random vector. 


\end{abstract}

\noindent \textbf{Keywords}: almost sure convergence, confidence ellipsoid, functional data, kernel estimation, small balls probability, multivariate conditional $L_1$-median, multivariate conditional distribution.

\section{Introduction }

 In statistics, researchers are often interested in how a
variable response $Y$ may be concomitant with an explanatory
variable $X$. Studying the relationship between $Y$ given a new
value of the explanatory variable $X$ is an important task in
non-parametric statistics. For instance, regression function
provides the mean value that takes $Y$ given $X=x$. Some other characteristics of the conditional distribution, such as conditional median, conditional quantiles, conditional
mode, maybe quite interesting in practice. Furthermore, it is widely acknowledged that
quantiles are more robust to outliers than regression function.

Conditional quantiles are widely studied when the explanatory
variable $X$ lies within a {\it finite} dimensional space. There
are many references on this topic (see \cite{gannoun2003}).

During the last decade, thanks to progress of computing tools,
there is an increasing number of examples coming from different
fields of applied sciences for which the {\it data are curves}.
For instance, some random variables can be observed at several
different times. This kind of variables,  known as {\it functional
variables} (of time for instance)
  in the literature,
 allows us to consider the data  as curves. The books by \cite{bosq2000} and \cite{ramsay2005}) propose an interesting description
 of the available procedures dealing with functional observations whereas \cite{ferraty2006}
 present a completely non-parametric point of view.
 These functional approaches mainly rely on generalizing multivariate statistical procedures in functional
 spaces and have been proved to be useful in various  areas  such as chemiomertrics (\cite{hastie} and \cite{quintela2011}), economy (\cite{kneip}),
 climatology (\cite{besse}), biology (\cite{Kirkpatrick}), Geoscience (\cite{quintela2011}) or hydrology (\cite{chebana}).
 These functional approaches are generally more appropriate than longitudinal data models
 or time series analysis when there are, for each curve, many measurement points (\cite{rice}).
 
In the {\it univariate} case (i.e. $Y \in \ER$ and $X$ is a
functional covariable), among the lot of papers
dealing with the nonparametric estimation of conditional
quantiles, one may cite papers by \cite{cardot2005} which introduced univariate quantile regression with functional covariate and \cite{ferraty2005} estimates conditional quantile by inverting the conditional
cumulative distribution function.
 \cite{ezzahrioui} establish the almost
 complete convergence and the asymptotic normality in the setting of independent and identically distributed (i.i.d.) data as well as
 under  $\alpha$-mixing condition. \cite{daboniang} stated the convergence in $L^p$-norm.  In the same framework,
 \cite{laksaci} estimated the conditional quantile nonparametrically, by adapting the $L^1$-norm method.
Recently \cite{quintela20112}
have used the same approach proposed by \cite{ferraty2005} to predict future
stratospheric ozone concentrations and to estimate return levels
of extreme values of tropospheric ozone.

 Over the past decades, researchers have shown increasing interest
in studying {\it multivariate} location parameters such as
multivariate quantiles in order to find suitable analogs of
univariate quantiles that used to construct descriptive statistics
and robust estimations of location. In contrast to the univariate
case, the order of observations $Y_i$ laying in  $\ER^{d}$  (with
$d\geq 2$) is not total. Consequently, several quantiles-type
multivariate definitions have been formulated. The pioneer paper
of \cite{haldane} considered a
multivariate extension of the median defined as an $M$-estimator
(also called spatial or $L_1$-median). The reader is referred to \cite{serfling} for historical reviews and
comparisons. \cite{chaudhuri1996} and \cite{koltchinskii} defined the
geometric quantile as an extension of multivariate quantiles based
on norm minimization and on the geometry of multivariate data
clouds.

 In contrast, relative little attention has been paid to
the multivariate conditional quantiles ($Y\in \ER^d$ and $X\in \ER^s$) and their large sample
properties. \cite{cadre} defined the conditional $L_1$-median and provided its uniform consistency on a compact subsets of $\ER^s$. Recently, \cite{degooijer} have introduced a multivariate conditional quantile notion, which
extends the definition of unconditional quantiles by \cite{abdous}, to predict tails from bivariate time series.
\cite{cheng} have generalized the notion of geometric quantiles, defined by \cite{chaudhuri1996}, to the conditional setting. They  have
established a Bahadur-type linear representation of the
$u$-th geometric conditional estimator as well as the
asymptotic normality  in the i.i.d. case.

The purpose of this paper is to add some new results to the
non-parametric estimation of the conditional $L_1$-median when $Y$
is a random vector with values in $\ER^d$ while the covariable $X$
take its values in some {\it infinite} dimensional space
$\mathcal{F}$.
 As far as we know, this problem has not been studied in
 literature before and the results obtained here are believed to be novel. Moreover,  our motivation for studying this type of robust
 estimator is due to its interest in some practical applications. Note also that, it would be better to predict all
components  of a vector of random variables simultaneously in order to
take into account the correlation between them rather than predicting
each of component  separately.  For instance, in EDF (French
electricity company) the estimation of the minimum and the maximum
of the electricity power demand represents an important research
issue for both economic and security reasons.  Because an underestimation
of the maximum consumed quantity of electricity (especially in winter)  may require importation of
electricity from  other  European countries with high prices, while
an over estimation of this maximum quantitiy may  induce a negative effect
on the electricity distribution network. The estimation of the
minimum power demand is also an important task for the same
reasons. Notice that the minimum and the maximum of the
electricity power demand are strongly correlated. Thus, it is more appropriate to predict these variables simultaneously rather
than predicting each of them separately. On the other hand,
weather variables, like temperature curves,  can play a key role to
explain the minimum and the maximum of power demand. Due to its
robust properties, the conditional $L_1$-median may  be used
to solve this prediction problem using a temperature curve as
covariate.

The  paper is organized as follows. Section 2 outlines notations
and the form of the new estimator. Section 3 presents the main
results concerning the asymptotic behavior of the estimator,
including consistency, asymptotic normality and evaluation of the
bias term. An estimation of the conditional confidence region is
then deduced. Section 4 is devoted to a simulation study giving an
example of the estimated confidence region. An application to
chemiometrical real data is proposed in Section 5, where we
compare three approaches: $L_{1}$-median regression, the vector of marginal conditional median and non-functional multivariate median
to predict a random vector. The proofs of the results in Section
3 are relegated to the Appendix.


\section{Notations and definitions}


Let us consider a random pair $(X, Y)$ where $X$ and $Y$ are two
random variables defined on the same probability space $(\Omega,
\mathcal{A}, \mathbb{P})$. We suppose that $Y$ is $\mathbb{R}^d$-valued and $X$ is a \textit{functional random variable
(f.r.v.)} takes its values in some infinite dimensional vector space $(
\mathcal{F}, d(\cdot,\cdot))$ equipped with a semi-metric
$d(\cdot,\cdot)$. Let $x$ be a fixed point in $\mathcal{F}$ and
$F(.|x)$ be the conditional cumulative distribution function
(cond. c.d.f) of $Y$ given $X=x$. The conditional $L_1$-median,
$\mu: {\cal F} \longrightarrow \mathbb{R}^d$,  of $Y$ given $X=x$,
is defined as the miminizer over $u$ of
\begin{eqnarray} \label{qdef}
& & \arg\min_{u \in \ER^d} \mathbb{E} [(\| Y - u\| - \|Y\|) \;| \;
X =x]
  = \arg\min_{u \in \ER^d} \displaystyle\int (\|y - u\| - \|y\|) \; dF(y \;| \;  x).
 \end{eqnarray}

\noindent  The general definition (\ref{qdef}) does not assume the
existence of the first order moment of $\|Y\|$.  However, when $Y$
has a finite expectation, $\mu(x)$ becomes a minimizer over $u$ of
$ \mathbb{E} [\|Y - u\| \;| \;  X =x].$  Notice that the
existence and the uniqueness of $\mu(x)$ is guaranteed, for $d
\geq 2,$ provided that the conditional distribution function
$F(\cdot | x)$ is not supported on a single straight line (see
theorem 2.17  of  \cite{kemperman}. Hence, uniqueness holds
whenever $Y$ has an absolutely continuous conditional distribution
on $\ER^d$ with $d \geq 2.$

\noindent Without loss of generality, we suppose in the sequel,
that $\mathbb{E}\|Y\|<\infty$.  Therefore  for any fixed $x\in
\mathcal{F}$, the conditional $L_1$-median $\mu(x)$  may  be
viewed as a minimizer of the function $G^x : \ER^d \longmapsto
\ER$ defined, for all $u\in\ER^d$, by
\begin{equation}\label{G}
G^x(u) :=\mathbb{E} [\|Y - u\| \;| \;  X =x],
\end{equation}

\noindent which is assumed to be differentiable and uniformly bounded with respect to $u$.

 We introduce now some further definitions and
 notations. Denote by   $A^t$ the transpose of  the matrix $A$, and let $\|A\| = \sqrt{tr(A^{t}\, A)} $ be the norm trace.
 Notice that for any $y\in \mathbb{R}^d$,
the function $y \longmapsto \|y\|$ is differentiable everywhere
except at $z=0_{\mathbb{R}^d}$, one may then  define (by
continuity extension) its derivative as $\mathcal{U}(y)=y/\|y\|$
when  $y\neq 0$ and $\mathcal{U}(y) = 0$ whenever
 $y=0$. For any  $y\neq u$,
 define
  $$\mathcal{M}(y,u) = (1/\|y-u\|)({\bf I}_d -
 \mathcal{U}(y-u)\mathcal{U}^t(y-u)),$$
 where ${\bf I}_d$ is the  $d\times d$ identity matrix. We denote by $\nabla_u G^x(u)$ the gradian of the function
$G^x(u)$ and by $H^x(u)$ its Hessian functional matrix (with
respect to $u$). According to \cite{koltchinskii}, it is easy to
see that
\begin{eqnarray}\label{gradian}
\nabla_u G^x(u) = - \mathbb{E}\left[\mathcal{U}(Y-u)\; | \; X=x
\right] \quad \mbox{and}
\end{eqnarray}
\begin{eqnarray}\label{H}
H^x(u) = \mathbb{E}\left[\mathcal{M}(Y,u) \;|\; X=x\right].
\end{eqnarray}
Notice that $H^x(u)$ is bounded whenever
$\mathbb{E}\left[\|Y-u\|^{-1}\mid X=x\right] < \infty.$ According to
(\ref{qdef}) and (\ref{gradian}), the conditional $L_1$-median
may be then implicitly defined as a zero with respect to $u$ of
the following equation:
\begin{eqnarray}\label{zero}
\nabla_u G^x(u) = 0.
\end{eqnarray}

To build our estimator, let $(X_i, Y_i)_{i = 1, \dots, n}$ be the
statistical sample of pairs which are independent and identically
distributed as $(X, Y)$. Let us denote by
$$w_{n,i}(x) = \frac{\Delta_i(x)}{\sum_{i=1}^n \Delta_i(x)},$$
the so-called Nadaraya-Watson weights, where $\Delta_i(x) =
 K\left(d(x,X_i)/h\right)$, with $K$ a kernel function,
  $h :=h_n$ is a sequence of positive real numbers which decreases to zero as $n$ tends to infinity.



\noindent A  kernel estimator of the function $G^x(u)$ is given by
\begin{equation}
G_{n}^x(u) = \sum_{i=1}^n w_{n,i}(x) \,\| Y_{i} - u \| = \frac{\sum_{i=1}^n \|Y_i - u\|\;
\Delta_i(x)}{\sum_{i=1}^n \Delta_i(x)}
:=\frac{G^x_{n,2}(u)}{G^x_{n,1}},
\end{equation}

\noindent when the denominator is not equal to 0, where
\begin{equation}
G^x_{n,j}((j-1)u) = \frac{1}{n\mathbb{E}(\Delta_1(x))}\sum_{i=1}^n
\|Y_i - u\|^{j-1} \Delta_i(x), \quad \mbox{for} \quad j=1,2 \quad
\mbox{with}\quad  G^x_{n, 1}(0):=  G^x_{n,1}.
\end{equation}
A kernel estimate  of $\nabla_u G^x(u)$  may be  defined by
\begin{eqnarray}\label{estgratian} \nabla_u G^x_n(u) := - \sum_{i=1}^n
w_{n,i}(x)\;\mathcal{U}(Y_i-u), \quad  u\in\ER^d.
\end{eqnarray}

According to the statement (\ref{G}), the estimator of the conditional $L_1$-median,
$\mu_n(x)$,  may be viewed as  a minimizer over $u$ of the
function $G^x_n(u)$, that is
\begin{equation}
\mu_n(x) = \arg\min_{u \in \ER^d} G^x_n(u),
\end{equation}
or as a zero with respect to $u$ of the equation $ \nabla_u G^x_n(u) = 0. $

 \noindent Similar to the Fact $2.1.1$ in \cite{chaudhuri1996}
and Remark $2.3$ in \cite{cheng}, the existence of
the estimator $\mu_n(x)$ is guaranteed by the fact that the
function $u\longmapsto\sum_{i=1}^n w_{n,i}(x) \|Y_i - u\|$
explodes to infinity as $|| u|| \rightarrow \infty$. On the other
hand, since this function is continuous with respect to $u$, then
$\mu_n(x)$ must be a minimizer over $u$ of $\sum_{i=1}^n
w_{n,i}(x) \|Y_i - u\|$. Next comes the question of uniqueness,
since $\ER^d$  is  equipped with the Euclidean norm that is a
strictly convex Banach space for $d\geq2$, it follows from Theorem
$2.17$ of \cite{kemperman} that unless all the data points $Y_1,
\dots, Y_n$ fall on a straight line in $\ER^d$, $\sum_{i=1}^n
w_{n,i}(x) \|Y_i - u\|$ must be a strictly convex function of $u$.
 This guarantees the uniqueness of the minimizer
 $\mu_n(x)$ in $\ER^d$, for any $d \geq2$.


\section{Main Results}
\subsection{Further notations and hypotheses}

Let $x$ be a given point in ${\cal F}$  and
$\mathcal{V}_x$  a  neighbourhood of $x$. Denote by
$\mathcal{B}(x, h)$ the ball of center $x$ and radius $h$, namely
$\mathcal{B}(x,h) = \{ x' \in \mathcal{F} : d(x,x') \leq h \}$.
For $(\ell, u)  \in \mathbb{R}\times \mathbb{R}^d $, denote by
$G_{\ell}^{x^\prime} (u)= \mathbb{E}\left[ \| Y- u\|^{\ell}\mid
X=x^\prime\right]$, for $x^{\prime} \in \mathcal{F}$. Our
hypotheses are gathered here for easy reference.

\begin{itemize}
\item[(H1)] $K$ is a nonnegative bounded kernel of class
  $\mathcal{C}^1$ over its support $[0,1]$ such that $K(1) >
  0.$ The derivative $K'$ exists on $[0,1]$ and satisfy the
  condition $K'(t) < 0,$  for all $t \in [0,1]$ and $|\int_0^1 (K^j)'(t) dt | < \infty$ for $j=1,2.$

\item[(H2)] For $x \in \mathcal{F}$, there exists a deterministic nonnegative bounded function $g$ and a nonnegative real function $\phi$ tending to zero, as its argument tends to 0, such that
\begin{itemize}
\item[$(i)$] $F_x(h) := \mathbb{P}(X \in \mathcal{B}(x,h)) = \phi(r)\cdot g(x) + o(\phi(h))$ as $h \rightarrow 0.$
\item[$(ii)$]There exists a nondecreasing bounded function $\tau_0$ such that, uniformly in $s \in [0,1]$,

$\displaystyle\frac{\phi(hs)}{\phi(h)} = \tau_0(s) + o(1),$ as $h \downarrow 0$ and, for $j \geq 1$, $\int_0^1 (K^j(t))' \tau_0(t) dt < \infty.$
\end{itemize}
\item[(H3)]
\begin{itemize}
\item[$(i)$] For $x \in \mathcal{F}$, $|G^x(u) - G^{x'}(u)| \leq
c_1 d^{\beta}(x,x')$ uniformly in $u$, for some $\beta>0$ and a
constant $c_1>0$, whenever $x' \in \mathcal{V}_{x}$,
 \item[$(ii)$] For $x^\prime \in \mathcal{F}$, the Hessian matrix
$H^{x^\prime}(u)$ is continuous in $\mathcal{V}_x$:

$\sup_{x'\in B(x,h)}\|H^{x}(u) -
H^{x'}(u)\|=o(1)$.
\item[$(iii)$] For some integer $m\geq 2$,
$G_{-m}^{x}(\mu(x)) <\infty$ and $G_{-m}^{x^\prime}(\mu(x))$ is continuous in $\mathcal{V}_x$.


\item[$(iv)$]  For some integer $m\geq 1$ and any $(k, j)$, $1\leq
k\leq d$, $1\leq j\leq d$, $\mathbb{E}\left[
\mathcal{M}_{k,j}^{m}(Y,\mu) \mid X)\right] < \infty$ and
$$\sup_{\{x^\prime : d(x,x^\prime) \leq h\}} \left|\mathbb{E}\left( \mathcal{M}_{k,j}^{m}(Y,\mu) \mid x^\prime)\right) -
\mathbb{E}\left( \mathcal{M}_{k,j}^{m}(Y,\mu) \mid x)\right)
\right| = o(1).$$
\end{itemize}
\item[(H4)] $(i)$ For each $x^\prime\in{\cal F}$,  $\sup_u G_{m}^{x^\prime}(u)
<\infty$ and $G_m^{x^\prime}(u)$ is continuous in $\mathcal{V}_{x}$ uniformly in $u$:
$$\displaystyle \sup_{u\in\mathbb{R}^{d}}\sup_{\{x^\prime: d(x,x') \leq h\}} |G^{x'}_m(u) - G^x_m(u)| = o(1).$$

 $(ii)$ For some $\delta >0$ and $\ell \in \ER^d$, the real
function $W_{i+j\delta}^{x^\prime}(\mu) := \mathbb{E}\left[|\ell^t
\mathcal{U}(Y-\mu)|^{i+j\delta} \;|\; X =x^\prime\right]$  ($i=1,
2$ and $j=0, 1$) is continuous in $\mathcal{V}_x.$

 \end{itemize}

\begin{itemize}
\item[ (H5)] For  any $i \geq 1$,
$\mathbb{E}\left[\mathcal{U}(Y-\mu)\;|\; d(x,X) =v\right] =:
\psi(v))$,
 where $v\in \mathbb{R}$ and $\psi : \ER \rightarrow \ER^d$ is a  differentiable function
 such that $\nabla\psi(0) \neq 0$.

\end{itemize}

\begin{remark}
\noindent  Notice that, since $d(\cdot, \cdot)$ is a semi-metric,
we have
$\psi(0)=\mathbb{E}\left[\mathcal{U}(Y-\mu)\;|\;X=x\right]$. As a
consequence, it follows from the definition of $\mu$ that
 $\psi(0)=0$.

\end{remark}
\noindent {\bf Comments on the Hypotheses}
\medskip

The above conditions are fairly mild. Condition (H1) is standard
in the context of functional non-parametric estimation. Contrarily to the real and vectorial cases
(for which we generally suppose the strict positivity of the explanatory variable's density,
 the concentration hypothesis (H2)-(i) acts directly on the distribution of the functional random variable rather than
on its density function. The idea of writing the small ball probability $F_{x}(h)$
as a product of two independent functions $g(x)$ and $\phi(h)$ was adopted by \cite{masry}
who reformulated the \cite{gasser} one. This assumption has been used by many authors where $g(x)$ is interpreted as a
 probability density, while $\phi(h)$ may be interpreted as a volume parameter. In the case of finite-dimensional space,
 that is $\mathcal{F}=\mathbb{R}^d$, it can be seen that $F_{x}(h) = C(d) h^d g(x) + o(h^d)$, where $C(d)$ is
 the volume of the unit ball in $\mathbb{R}^d$. Furthermore, in {\it infinite}  dimensions, there exist many examples
 fulfilling the decomposition mentioned in assumption
 (H2)-(i) (see \cite{ferraty2007} and \cite{ezzahrioui} for more details).
  The function $\tau_{0}(\cdot)$, introduced in assumption (H2)-(ii), plays a determinant role in asymptotic properties,
  in particular when we give the order of the conditional bias and the asymptotic variance term.

Conditions (H3) and (H4) are mild smoothness assumptions on the functionals $G^{(\cdot)}(u)$ and $H^{(\cdot)}(u)$ and continuity assumptions on certain second-order moments. A similar assumption to (H3)-(iii) has been supposed in \cite{cheng} (see condition 6 in their paper). Condition (H5) is used to evaluate the bias term.



\subsection{Almost sure consistency}
The following result states the almost surely (a.s.)
convergence (with rate)  of the functional estimator $G^x_n(u)$.
This result plays an instumental role to prove the almost sure consistency
of $\mu_n(x)$ for a fixed $x \in{\cal F}$.

\begin{propo}\label{resG}
Assumes that conditions (H1)-(H2), (H3)(i) and (H4)(i)  hold
true and
\begin{equation}\label{CondThm1}
(i) \quad \frac{\log n}{n\phi(h)}\to 0 \quad \mbox{and} \quad (ii)
\ \frac{n\phi(h) h^{2\beta}}{\log n}\to 0 \quad \mbox{as} \quad
n\to \infty, \;\mbox{where} \; \beta\; \mbox{is is given in $(H3)$},
\end{equation}
%
\begin{equation}\label{Cond3.Thm1}
\overline{\lim}_{||u||\to \infty}||u||G^x(u)<\infty.
\end{equation}
 Then, we have
$$\sup_{u\in\ER^d} \left|G^x_n(u) - G^x(u)\right| = O_{a.s}(h^\beta)+O_{a.s}\left(\sqrt{\frac{\log n}{n\phi(h)}} \right). $$
\end{propo}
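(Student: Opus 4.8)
The plan is to decompose the supremum into a bias (deterministic) part and a stochastic (variance) part using the usual Nadaraya–Watson ratio structure. Write $G^x_n(u) = G^x_{n,2}(u)/G^x_{n,1}$, and similarly note that the target satisfies $G^x(u) = \mathbb{E}[G^x_{n,2}(u)]/\mathbb{E}[G^x_{n,1}]$ up to lower-order terms coming from (H3)(i) and (H4)(i). Standard algebra gives
\begin{equation*}
G^x_n(u) - G^x(u) = \frac{1}{G^x_{n,1}}\Bigl( \bigl(G^x_{n,2}(u) - \mathbb{E} G^x_{n,2}(u)\bigr) - G^x(u)\bigl(G^x_{n,1} - \mathbb{E} G^x_{n,1}\bigr) \Bigr) + \text{bias terms}.
\end{equation*}
First I would show $G^x_{n,1} \to g(x) > 0$ a.s., so that $1/G^x_{n,1}$ is a.s. bounded for $n$ large; this follows from a one-dimensional version of the concentration argument (it is the special case $u=0$, $j=1$, and does not require the supremum over $u$). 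The bias contribution is $O(h^\beta)$ by (H3)(i) together with (H2)(ii) controlling the ratio $\phi(hs)/\phi(h)$; this is where condition \eqref{Cond3.Thm1} (equivalently the hypothesis that $G^x$ is uniformly bounded) enters to keep the $G^x(u)$ factor in front of $(G^x_{n,1}-\mathbb{E} G^x_{n,1})$ under control uniformly in $u$.

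The heart of the proof is the uniform (in $u$) a.s. rate for the centered term $G^x_{n,2}(u) - \mathbb{E} G^x_{n,2}(u)$. For a \emph{fixed} $u$ this is a sum of i.i.d. bounded-variance triangular-array terms of the form $\frac{1}{n\mathbb{E}\Delta_1(x)} \|Y_i-u\| \Delta_i(x)$; a Bernstein exponential inequality, using (H4)(i) to bound the conditional second moment $G^x_2(u)$ and (H2)(i) to identify $\mathbb{E}\Delta_1(x) \asymp \phi(h) g(x)$, yields the rate $\sqrt{\log n/(n\phi(h))}$ pointwise. To upgrade this to uniformity in $u \in \mathbb{R}^d$ I would exploit that the map $u \mapsto \|Y_i - u\|$ is $1$-Lipschitz: on a compact ball $\{\|u\| \le R_n\}$ one covers by an $\varepsilon_n$-net of cardinality polynomial in $R_n/\varepsilon_n$, applies Bernstein on each net point with a union bound (the $\log(\text{net size})$ is absorbed into $\log n$ provided $R_n$ grows at most polynomially), and controls the oscillation between net points by the Lipschitz bound. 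Outside the ball, for $\|u\|$ large, $G^x_n(u) - G^x(u)$ is negligible: both $G^x_n(u)$ and $G^x(u)$ behave like $\|u\|$ plus lower-order terms — more precisely $\bigl|\,\|Y_i - u\| - \|u\|\,\bigr| \le \|Y_i\|$ — so the difference $G^x_n(u) - G^x(u)$ reduces to $\sum_i w_{n,i}(x)\|Y_i\| - \mathbb{E}[\|Y\| \mid X=x]$ plus a term of order the bias, and this no longer depends on $u$; condition \eqref{Cond3.Thm1} guarantees the truncation level $R_n$ can be chosen so that the tail is uniformly dominated.

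I expect the main obstacle to be the uniformity step: making the net argument compatible with an \emph{unbounded} index set $u \in \mathbb{R}^d$. The clean way around it is exactly the reduction just sketched — split at $\|u\| = R_n$ with $R_n$ a slowly growing polynomial, handle the compact part by Lipschitz-net plus Bernstein, and handle the tail by the elementary inequality $\bigl|\,\|Y-u\|-\|u\|\,\bigr|\le\|Y\|$ which removes the $u$-dependence entirely, invoking $\mathbb{E}\|Y\|<\infty$ and \eqref{Cond3.Thm1}. A secondary technical point is verifying that $\mathbb{E}\Delta_1(x) \ge C\,K(1)\,\phi(h) g(x)$ for $n$ large, which is where (H1) (the requirements $K(1)>0$ and $K'<0$ on $[0,1]$, together with $|\int_0^1 (K^j)'\tau_0| < \infty$ from (H2)(ii)) is used to turn $\mathbb{E} K(d(x,X_1)/h)$ into an integral against $\phi(h)\tau_0$; this is by now a routine lemma in the functional-data literature and I would cite it from \cite{ferraty2007} or \cite{ferraty2006}. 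Combining the bias $O_{a.s.}(h^\beta)$, the variance $O_{a.s.}(\sqrt{\log n/(n\phi(h))})$, and the a.s. lower bound on the denominator gives the stated rate; conditions \eqref{CondThm1}(i)–(ii) are precisely what make both error terms tend to zero.
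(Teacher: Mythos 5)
Your proposal follows essentially the same route as the paper: the Nadaraya--Watson ratio decomposition into a bias term of order $h^\beta$ (from (H3)(i)) plus a centered stochastic term, an exponential inequality for the denominator and at each net point, a covering of a polynomially growing ball $\{\|u\|\le n^\gamma\}$ whose oscillation is controlled by the $1$-Lipschitz property of $u\mapsto\|Y_i-u\|$, and a separate treatment of the tail $\|u\|>n^\gamma$ using condition (\ref{Cond3.Thm1}) together with $\mathbb{E}\|Y\|<\infty$ and Borel--Cantelli. The only discrepancy is notational: with the paper's normalization $G^x_{n,1}=\frac{1}{n\mathbb{E}\Delta_1(x)}\sum_i\Delta_i(x)$ the denominator converges a.s.\ to $1$ rather than to $g(x)$, but since all that is needed is that it is a.s.\ bounded away from zero this does not affect the argument.
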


Notice that the condition (\ref{Cond3.Thm1}) is standard when we
deal with the uniform consistency of the density function on the
 whole space (see, for instance, Corollary 2.2 of
 \cite{bosq1996}).

 \noindent Here then, we give our first result of the
conditional $L_1$-median estimator $\mu_n(x)$.

\begin{theorem}\label{thm2}
Assume  (H1)-(H2), (H3)(i) and  (H4)(i)  and condition
(\ref{CondThm1}) hold true. Then, we have
\begin{equation}\label{Con.median}
\lim_{n\rightarrow\infty}\mu_n(x) = \mu(x) \quad \mbox{a.s.}
\end{equation}
\end{theorem}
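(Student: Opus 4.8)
The plan is to deduce almost sure consistency of $\mu_n(x)$ from the uniform almost sure convergence of $G^x_n(u)$ to $G^x(u)$ established in Proposition \ref{resG}, together with the fact that $\mu(x)$ is the unique minimizer of the limit function $G^x$. This is a standard argmin-consistency argument (an M-estimation style argument), but it requires a little care because the minimization runs over the unbounded set $\ER^d$; we must prevent the estimated minimizer from escaping to infinity. First I would fix $x$ and, since by hypothesis the assumptions of Proposition \ref{resG} hold (note that condition (\ref{Cond3.Thm1}) is exactly the hypothesis that $\overline{\lim}_{\|u\| \to \infty} \|u\| G^x(u) < \infty$, which combined with $G^x$ being uniformly bounded is assumed in Section 2), conclude that
\begin{equation*}
\sup_{u \in \ER^d} |G^x_n(u) - G^x(u)| \longrightarrow 0 \quad \text{a.s.}
\end{equation*}
Work from now on along the almost sure event on which this convergence holds.

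Next I would establish the tightness / no-escape property: there is a compact ball $B_R = \{u : \|u\| \le R\}$ such that, for $n$ large enough (on the good event), $\mu_n(x) \in B_R$. To see this, note that $G^x(u)$ explodes as $\|u\| \to \infty$ — indeed $G^x(u) = \mathbb{E}[\|Y-u\| \mid X=x] \ge \|u\| - \mathbb{E}[\|Y\| \mid X=x]$ by the triangle inequality, so $G^x(u) \to \infty$. Pick $R$ large enough that $\inf_{\|u\| = R} G^x(u) > G^x(\mu(x)) + 3\varepsilon_0$ for some fixed $\varepsilon_0 > 0$. For $n$ large, uniform convergence gives $\sup_u |G^x_n(u) - G^x(u)| < \varepsilon_0$, hence $G^x_n(\mu(x)) < G^x(\mu(x)) + \varepsilon_0$ while $G^x_n(u) > G^x(\mu(x)) + 2\varepsilon_0$ for all $\|u\| = R$. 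Since $u \mapsto G^x_n(u)$ is continuous and convex (it is a finite positive combination of the convex functions $\|Y_i - u\|$), its value on the sphere $\|u\| = R$ forces the minimizer to stay strictly inside $B_R$: any $u$ with $\|u\| > R$ lies on a ray from $\mu(x)$ that crosses the sphere, and convexity along that ray makes the crossing value no larger than $\max(G^x_n(\mu(x)), G^x_n(u))$, so $G^x_n(u) \ge G^x_n(\mu(x))$, i.e. such $u$ cannot beat $\mu(x)$. Thus $\mu_n(x) \in B_R$ eventually.

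Finally I would run the classical argmin argument on the compact set $B_R$. Let $(n_k)$ be any subsequence; since $\mu_{n_k}(x) \in B_R$ which is compact, extract a further subsequence along which $\mu_{n_k}(x) \to u^\ast$ for some $u^\ast \in B_R$. Then
\begin{equation*}
G^x(u^\ast) = \lim_k G^x(\mu_{n_k}(x)) = \lim_k \left[ G^x(\mu_{n_k}(x)) - G^x_{n_k}(\mu_{n_k}(x)) \right] + \lim_k G^x_{n_k}(\mu_{n_k}(x)),
\end{equation*}
where the first bracketed term vanishes by uniform convergence (and continuity of $G^x$, which follows from (H3)(ii) or directly from dominated convergence using $\mathbb{E}[\|Y\|\mid X=x] < \infty$), while $G^x_{n_k}(\mu_{n_k}(x)) = \inf_u G^x_{n_k}(u) \le G^x_{n_k}(\mu(x)) \to G^x(\mu(x))$. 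Hence $G^x(u^\ast) \le G^x(\mu(x))$, and since $\mu(x)$ is the unique minimizer of $G^x$ (guaranteed for $d \ge 2$ by Theorem 2.17 of \cite{kemperman} under the standing assumption that $F(\cdot \mid x)$ is not supported on a line), we get $u^\ast = \mu(x)$. As every subsequence of $(\mu_n(x))$ has a further subsequence converging to $\mu(x)$, the whole sequence converges: $\mu_n(x) \to \mu(x)$ a.s.

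The main obstacle is the unbounded domain: the uniform convergence in Proposition \ref{resG} is genuinely over all of $\ER^d$, but one still needs the coercivity of $G^x$ (from $\mathbb{E}\|Y\| < \infty$) together with the convexity of $G^x_n$ to confine $\mu_n(x)$ to a fixed compact set with probability one for large $n$; everything after that is routine. An alternative route, avoiding the subsequence argument, is to work directly with the gradient characterization $\nabla_u G^x_n(\mu_n(x)) = 0$ and invoke continuity of $\nabla_u G^x$ near $\mu(x)$ together with nonsingularity of the Hessian $H^x(\mu(x))$, but that needs the extra smoothness hypotheses (H3)(ii)–(iii) and is really the machinery reserved for the asymptotic normality; for mere consistency the convex-argmin argument above is cleaner and uses only the hypotheses already invoked.
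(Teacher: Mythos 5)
Your argument is correct, and it rests on the same two pillars as the paper's proof --- the uniform almost sure convergence $\sup_{u\in\ER^d}|G_n^x(u)-G^x(u)|\rightarrow 0$ from Proposition \ref{resG} and the uniqueness of the minimizer $\mu(x)$ --- but the deduction is organized differently. The paper first derives $|G^x(\mu(x))-G^x(\mu_n(x))|\leq 2\sup_{u}|G_n^x(u)-G^x(u)|$ (its inequality (\ref{sup})), then invokes a well-separated-minimum property (its display (\ref{uniqueness})): for every $\epsilon>0$ there is $\eta(\epsilon)>0$ with $G^x(u)>G^x(\mu(x))+\eta(\epsilon)$ whenever $\|u-\mu(x)\|\geq\epsilon$, so that the event $\{\|\mu_n(x)-\mu(x)\|>\epsilon\}$ is contained in $\{\sup_u|G_n^x(u)-G^x(u)|>\eta(\epsilon)/2\}$, and it concludes by Borel--Cantelli. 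You instead confine $\mu_n(x)$ to a fixed compact ball using the coercivity of $G^x$ and the convexity of $G_n^x$, and then run a subsequence-compactness argmin argument on that ball. Both routes work and are of comparable length; what yours buys is that it makes explicit an ingredient the paper leaves tacit: on the unbounded domain $\ER^d$, uniform continuity plus uniqueness of the minimizer do \emph{not} by themselves yield the separation (\ref{uniqueness}) (the infimum over $\{\|u-\mu(x)\|\geq\epsilon\}$ could be approached only as $\|u\|\rightarrow\infty$), so one needs the coercivity $G^x(u)\geq\|u\|-\mathbb{E}[\|Y\|\mid X=x]$, which you state and use and which also repairs the paper's step. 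One caveat on your opening remark: condition (\ref{Cond3.Thm1}), which Proposition \ref{resG} requires but the theorem's hypothesis list omits, reads $\overline{\lim}_{\|u\|\to\infty}\|u\|G^x(u)<\infty$ and is literally incompatible with the coercivity you later exploit (it would force $G^x(u)\rightarrow 0$); it is evidently a misprint in the paper rather than something ``assumed in Section 2'', so it is better to flag it than to assert that it holds. This does not affect the logic of your consistency argument, which only uses the uniform convergence, the convexity of $G_n^x$, and $G^x(u)\rightarrow\infty$.
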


\subsection{Asymptotic normality}
To state the asymptotic normality of our estimator, some notations are required.
 Let us first denote by
 $$\widetilde{G}_{n}^x(u) =  \frac{\sum_{i=1}^n \| Y_{i} - u \| \,
 \Delta_{i}(x)}{n\;\mathbb{E}(\Delta_{1}(x))} \quad \mbox{and}
 \quad
 \nabla_{u}\widetilde{G}_{n}^x (u) = - \frac{\sum_{i=1}^n \mathcal{U}(Y_{i} - u) \Delta_{i}(x)}{n\mathbb{E}(\Delta_{1}(x))}. $$
Set   $\mu(x) =: \mu= (\mu_1, \dots,\mu_d)^t$ and $\mu_n(x) =:
\mu_n=(\mu_{n,1}, \dots, \mu_{n,d})^t$. We have by the definition
 of $\mu_n$ that
\begin{eqnarray}
\label{equivo}
\nabla_u G_n^x(\mu_n) = -\; \frac{\sum_{i=1}^n\mathcal{U}(Y_i-\mu_n) \Delta_i(x)}{\sum_{i=1}^n \Delta_i(x)} =0.
\end{eqnarray}
Obviously the equation (\ref{equivo}) is satisfied when the numerator is null. Then, we can say also that
\begin{eqnarray}
\label{equivo2}
\nabla_u \widetilde G_n^x(\mu_n) = - \frac{\sum_{i=1}^n\mathcal{U}(Y_i-\mu_n) \Delta_i(x)}{n\; \mathbb{E}(\Delta_1(x))} =0.
\end{eqnarray}
\noindent Thereafter, one may write
\begin{eqnarray}\label{decomposition}
\nabla_u \widetilde G_n^x(\mu_n) - \nabla_u \widetilde G_n^x(\mu)= - \nabla_u \widetilde G_n^x(\mu).
\end{eqnarray}

\noindent For each $j\in\{1,\dots,d\}$, Taylor's expansion applied
to the real-valued function  $\displaystyle\frac{\partial
\widetilde G_n^x}{\partial u_j}$ implies the existence of $\xi_n(j)
= (\xi_{n,1}(j), \dots,\xi_{n,d}(j))^t$ such that

$$\left\{\begin{array}{l}
\displaystyle\frac{\partial \widetilde G_n^x}{\partial u_j}(\mu_n) -
\frac{\partial \widetilde G_n^x}{\partial u_j}(\mu) =
 \sum_{k=1}^d \frac{\partial^2 \widetilde G_n^x}{\partial u_j\partial u_k} (\xi_n(j)) (\mu_{n,k}-\mu_k),\\
|\xi_{n,k}(j) - \mu_k| \leq |\mu_{n,k}(j) - \mu_k|.
\end{array}\right. $$

\medskip

\noindent Define the $d\times d$ matrix $\widetilde H_n^x(\xi_n(j))=(\widetilde H_{n,k,j}^x(\xi_n(j)))_{1\leq k,j\leq d}$ by setting

$$\widetilde H_{n,k,j}^x(\xi_n(j)) = \displaystyle\frac{\partial^2 \widetilde G_n^x}{\partial u_j\partial u_k} (\xi_n(j)),$$

\noindent where, for all $u\in \ER^d$ and $x\in\mathcal{F}$,
\begin{eqnarray*}
\widetilde H_{n,k,j}^x(u) &=& \sum_{i=1}^n \frac{1}{\|Y_i-u\|} \left[\delta_{k,j} - \frac{(Y_i^j-u^j)(Y_i^k-u^k)}{\|Y_i-u\|^2} \right] \times \frac{\Delta_i(x)}{n\;\mathbb{E}(\Delta_1(x))} \\
&=& \displaystyle\frac{\sum_{i=1}^n \mathcal{M}_{k,j}(Y_i,u) \Delta_i(x)}{n\;\mathbb{E}(\Delta_1(x))},
\end{eqnarray*}
with  $\delta_{k,j} = 1$ if $k=j$ and zero otherwise and
$\mathcal{M}_{k,j}(Y_i,u) =
[\delta_{k,j}-\frac{(Y_i^j-u^j)(Y_i^k-u^k)}{\|Y_i-u\|^2}]/\|Y_i-u\|$
is the $(k,j)$-th element of the matrix $\mathcal{M}(Y_i,u)$.
Equation (\ref{decomposition}) can be then rewritten as
\begin{eqnarray}\label{decomposition1}
\widetilde H_n^x(\xi_n(j))\;(\mu_n - \mu) = - \nabla_u \widetilde G_n^x(\mu).
\end{eqnarray}

\noindent Equation (\ref{decomposition1})  plays a key role to
give the conditional bias and the asymptotic distribution of the conditional
$L_1$-median estimator $\mu_n$.


\begin{propo}\label{variance}
Under assumptions (H1)-(H3) and  (H4)(i) and condition
(\ref{CondThm1})(i), we have
\begin{eqnarray*}
 \| \widetilde H_n^x(\xi_n(j)) - H^x(\mu)\| = o_{\mathbb{P}}(1), \quad\quad{as}\; n\rightarrow\infty.
\end{eqnarray*}
\end{propo}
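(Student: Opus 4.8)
\medskip
\noindent\textit{Proof (proposal).} The plan is to control $\widetilde H_n^x(\xi_n(j)) - H^x(\mu)$ entrywise via the decomposition, for each $1\le k,j\le d$,
\begin{eqnarray*}
\widetilde H_{n,k,j}^x(\xi_n(j)) - H_{k,j}^x(\mu) &=& \underbrace{\big[\widetilde H_{n,k,j}^x(\xi_n(j)) - \widetilde H_{n,k,j}^x(\mu)\big]}_{A_{n,k,j}} + \underbrace{\big[\widetilde H_{n,k,j}^x(\mu) - \mathbb{E}\,\widetilde H_{n,k,j}^x(\mu)\big]}_{B_{n,k,j}} \\
& & +\; \underbrace{\big[\mathbb{E}\,\widetilde H_{n,k,j}^x(\mu) - H_{k,j}^x(\mu)\big]}_{C_{n,k,j}},
\end{eqnarray*}
and to prove that each of $A_{n,k,j}$, $B_{n,k,j}$, $C_{n,k,j}$ is $o_{\mathbb P}(1)$; since the sum over $k,j$ is finite this controls the norm trace $\|\widetilde H_n^x(\xi_n(j)) - H^x(\mu)\|$. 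Two facts will be used throughout: the standard consequences of (H1)--(H2), namely $\mathbb{E}(\Delta_1^\ell(x)) \asymp \phi(h)$ for $\ell=1,2$; and the fact that, by Theorem~\ref{thm2} together with the sandwich $|\xi_{n,k}(j)-\mu_k|\le |\mu_{n,k}-\mu_k|$, one has $\xi_n(j)\to\mu$ almost surely.

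\noindent\emph{The bias term $C_{n,k,j}$.} Since the summands are i.i.d., $\mathbb{E}\,\widetilde H_{n,k,j}^x(\mu) = \mathbb{E}[\mathcal{M}_{k,j}(Y_1,\mu)\Delta_1(x)]/\mathbb{E}[\Delta_1(x)]$, and conditioning on $X_1$ rewrites the numerator as $\mathbb{E}[H_{k,j}^{X_1}(\mu)\Delta_1(x)]$ by (\ref{H}). As $\Delta_1(x)$ vanishes outside $\mathcal{B}(x,h)$, assumption (H3)(ii) yields the deterministic bound $|C_{n,k,j}|\le \sup_{x'\in\mathcal{B}(x,h)}|H_{k,j}^{x'}(\mu)-H_{k,j}^{x}(\mu)| = o(1)$.

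\noindent\emph{The fluctuation term $B_{n,k,j}$.} This is a centred normalised sum of i.i.d.\ variables, so by Chebyshev's inequality it suffices to bound its variance. Using $\mathcal{M}_{k,j}^2(y,\mu)\le\|\mathcal{M}(y,\mu)\|^2=(d-1)\|y-\mu\|^{-2}$,
$$\mathrm{Var}\big(\widetilde H_{n,k,j}^x(\mu)\big)\le\frac{(d-1)\,\mathbb{E}\big[\|Y_1-\mu\|^{-2}\Delta_1^2(x)\big]}{n\,(\mathbb{E}\Delta_1(x))^2};$$
conditioning on $X_1$ and invoking (H3)(iii) --- finiteness and continuity of $x'\mapsto G_{-2}^{x'}(\mu)$, the latter following from $G_{-m}^{x}(\mu)<\infty$, $m\ge2$, by H\"older --- bounds the numerator by $O(\phi(h))$, whence $\mathrm{Var}\big(\widetilde H_{n,k,j}^x(\mu)\big)=O\big((n\phi(h))^{-1}\big)\to 0$ by (\ref{CondThm1})(i). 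Hence $B_{n,k,j}=o_{\mathbb P}(1)$.

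\noindent\emph{The equicontinuity term $A_{n,k,j}$ --- the main obstacle.} This is the delicate step, since $u\mapsto\widetilde H_n^x(u)$ has no uniform modulus of continuity near $\mu$ because of the singularity of $\mathcal{M}(y,u)$ at $u=y$. I would fix $\varepsilon>0$ and work on the event $\{\|\xi_n(j)-\mu\|\le\varepsilon\}$, whose probability tends to $1$, splitting each summand of $A_{n,k,j}$ according to whether $\|Y_i-\mu\|>2\varepsilon$ (the \emph{far} part) or $\|Y_i-\mu\|\le2\varepsilon$ (the \emph{near} part). On the far part the segment $[\mu,\xi_n(j)]$ stays at distance at least $\|Y_i-\mu\|/2$ from $Y_i$, so a mean-value estimate (using $\|\nabla_u\mathcal{M}_{k,j}(y,u)\|\le C\|y-u\|^{-2}$) gives $|\mathcal{M}_{k,j}(Y_i,\xi_n(j))-\mathcal{M}_{k,j}(Y_i,\mu)|\le C\varepsilon\|Y_i-\mu\|^{-2}$; combined with $\mathbb{E}[\|Y_1-\mu\|^{-2}\Delta_1(x)]/\mathbb{E}\Delta_1(x)\le\sup_{x'\in\mathcal{B}(x,h)}G_{-2}^{x'}(\mu)<\infty$ from (H3)(iii), Markov's inequality makes the far contribution $O_{\mathbb P}(\varepsilon)$. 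On the near part I would use the crude bounds $|\mathcal{M}_{k,j}(Y_i,\mu)|\le C\|Y_i-\mu\|^{-1}$ and $|\mathcal{M}_{k,j}(Y_i,\xi_n(j))|\le C\|Y_i-\xi_n(j)\|^{-1}$: the first yields a term with expectation at most $C\sup_{x'\in\mathcal{B}(x,h)}\mathbb{E}[\|Y-\mu\|^{-1}\1\{\|Y-\mu\|\le2\varepsilon\}\mid X=x']$, which tends to $0$ as $\varepsilon\downarrow0$ by dominated convergence (using $G_{-1}^{x}(\mu)<\infty$ and continuity in $x'$ of this truncated moment). The genuinely hard piece is the $\|Y_i-\xi_n(j)\|^{-1}$--term, because $\xi_n(j)$ is data-dependent and may lie close to some $Y_i$; here the plan is to exploit the absolute continuity of the conditional law $F(\cdot\mid x')$ for $x'$ near $x$ --- so that, for $d\ge2$, $\mathbb{E}[\|Y-v\|^{-1}\1\{\|Y-v\|\le3\varepsilon\}\mid X=x']=O(\varepsilon^{d-1})$ uniformly for $v$ in a neighbourhood of $\mu$ --- and, if needed, to combine this with a preliminary rate of convergence of $\mu_n$ to $\mu$ in order to localise $\xi_n(j)$ finely enough. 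Letting first $n\to\infty$ and then $\varepsilon\downarrow0$ gives $A_{n,k,j}=o_{\mathbb P}(1)$, and the proposition follows. \hfill \fbox{~}
\medskip
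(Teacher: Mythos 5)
Your decomposition is the same as the paper's: the paper also splits $\widetilde H_n^x(\xi_n(j)) - H^x(\mu)$ into the equicontinuity piece $\widetilde H_n^x(\xi_n(j)) - \widetilde H_n^x(\mu)$ plus a centred fluctuation term and a bias term, and it treats the bias exactly as you treat $C_{n,k,j}$ (conditioning on $X_1$ and invoking (H3)(ii)). For the fluctuation the paper applies the exponential inequality of Ferraty and Vieu together with (H3)(iv), where you use Chebyshev with second moments controlled via (H3)(iii); both are legitimate and yours is the more elementary. So the first two thirds of your argument match the paper in substance.

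The gap is in $A_{n,k,j}$: you have correctly located the difficulty but not closed it. The far/near split, the mean-value bound on the far part, and the $\|Y_i-\mu\|^{-1}$ half of the near part are all fine. But the $\|Y_i-\xi_n(j)\|^{-1}$ half is exactly where the proof must do real work, because $\xi_n(j)$ is data-dependent and nothing a priori keeps it away from the $Y_i$'s. Your proposed uniform-in-$v$ bound $\mathbb{E}\left[\|Y-v\|^{-1}\1_{\{\|Y-v\|\le 3\varepsilon\}}\mid X=x'\right]=O(\varepsilon^{d-1})$ does not by itself control $\sup_{\|v-\mu\|\le\varepsilon}\frac{1}{n\mathbb{E}\Delta_1(x)}\sum_i \|Y_i-v\|^{-1}\1_{\{\|Y_i-v\|\le 3\varepsilon\}}\Delta_i(x)$ --- a supremum of random sums is not bounded by the supremum of the expectations --- and the clause ``if needed, combine with a preliminary rate of convergence'' is a plan, not an argument; one would need a covering/empirical-process step over $v$ in the shrinking ball, or a direct argument that $\min_i\|Y_i-\xi_n(j)\|$ is not too small with probability tending to one. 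For what it is worth, the paper's own proof does not do better here: it asserts, ``using Theorem \ref{thm2} and the triangular inequality,'' that $\mathcal{A}_n = o_{a.s.}(1)\times\frac{1}{n\mathbb{E}\Delta_1(x)}\sum_i \Delta_i(x)\|Y_i-\mu\|^{-2}$, which silently replaces $\|Y_i-\xi_n(j)\|^{-1}$ by a constant multiple of $\|Y_i-\mu\|^{-1}$ --- precisely the step you flag as the genuinely hard piece. So your proposal is incomplete at the same point where the published proof is merely assertive.
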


\vskip 2mm

 \noindent  Using  Remark 4 and Lemma $5.3$ of
\cite{chaudhuri1992}, we know that both the matrix $H^x(\mu)$ itself
and its inverse matrix exist whenever  $d\geq2$. It follows  from
this result combined with (\ref{decomposition1}) that, for n large enough, $\mu_n-\mu=
-[H^x(\mu)]^{-1}\nabla_u \widetilde G_n^x(\mu)+ o_{\mathbb{P}}(1)$. One may then write, for large $n$ that
\begin{eqnarray}\label{deco}
\sqrt{n\phi(h)} \left(\mu_n - \mu \right) =\sqrt{n\phi(h)} \left[H^x(\mu)\right]^{-1}  \left[- \left(\nabla_u \widetilde G_n^x(\mu) -
 \mathbb{E}\left[\nabla_u \widetilde G_n^x(\mu)\right] \right) -
 \widetilde{\mathcal{B}}_n(x)\right] + o_{\mathbb{P}}(1),
\end{eqnarray}
where  $\widetilde{\mathcal{B}}_n(x)=\mathbb{E}\left[\nabla_u \widetilde
G_n^x(\mu)\right].$

%

\medskip

\noindent The following  proposition gives  the order of the conditional bias  term
$\mathcal{B}_{n}(x) = - \left[ H^x(\mu) \right]^{-1}\widetilde{\mathcal{B}}_n(x)$.

\begin{propo}\label{B}
Under assumptions $(H1)$, $(H2)$ and  $(H5)$, and the fact that\\
$g(x) > 0$ and $|\int_0^1 (sK(s))' \tau_0(s)ds| < \infty$, we
have:

$$ \mathcal{B}_{n}(x) =
\frac{h \left[H^x(\mu)\right]^{-1} \nabla\psi(0)}{M_1}\left[
\int_0^1 (sK(s))' \tau_0(s) ds - K(1) + o_{a.s.}(1) \right], $$
where for $j=1,2$,  $M_j = K^j(1) - \int_0^1 (K^j)' \tau_0 (z)dz. $

\end{propo}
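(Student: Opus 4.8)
The plan is to compute the deterministic quantity $\widetilde{\mathcal{B}}_n(x)=\mathbb{E}[\nabla_u\widetilde G_n^x(\mu)]$ as a ratio of two scalar expectations, reduce each expectation to a one--dimensional Riemann--Stieltjes integral against the small--ball distribution function by conditioning on the real random variable $d(x,X_1)$, and then apply the standard functional--kernel asymptotics driven by (H1)--(H2). By the i.i.d.\ assumption,
$$\widetilde{\mathcal{B}}_n(x)=-\,\frac{\mathbb{E}\!\left[\mathcal{U}(Y_1-\mu)\,\Delta_1(x)\right]}{\mathbb{E}[\Delta_1(x)]}.$$
Since $\Delta_1(x)=K(d(x,X_1)/h)$ is a measurable function of $d(x,X_1)$, conditioning on $d(x,X_1)$ and using (H5) gives $\mathbb{E}[\mathcal{U}(Y_1-\mu)\Delta_1(x)]=\mathbb{E}[\Delta_1(x)\,\psi(d(x,X_1))]$. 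Because $d(x,X_1)\le h\to0$ on the support of $K(\cdot/h)$ and $\psi$ is differentiable with $\psi(0)=0$ (the remark following (H5)), a first--order Taylor expansion yields $\psi(v)=\nabla\psi(0)\,v+v\,\varepsilon(v)$ with $\sup_{0\le v\le h}|\varepsilon(v)|\to0$, so that $\mathbb{E}[\Delta_1(x)\psi(d(x,X_1))]=\nabla\psi(0)\,\mathbb{E}[d(x,X_1)\Delta_1(x)]+o\!\big(h\,\mathbb{E}[\Delta_1(x)]\big)$.

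It then remains to evaluate $\mathbb{E}[\varphi(d(x,X_1)/h)]$ for $\varphi=K$ and for $\varphi(t)=tK(t)$ (note $\mathbb{E}[d(x,X_1)\Delta_1(x)]=h\,\mathbb{E}[(d(x,X_1)/h)K(d(x,X_1)/h)]$). Writing $F_x$ for the c.d.f.\ of $d(x,X_1)$ and integrating by parts in the Riemann--Stieltjes sense over $[0,1]$ (the boundary term at $0$ vanishes: for $\varphi(t)=tK(t)$ because $\varphi(0)=0$, and for $\varphi=K$ because $F_x(0)=0$), one gets $\mathbb{E}[\varphi(d(x,X_1)/h)]=\varphi(1)F_x(h)-\int_0^1\varphi'(t)F_x(ht)\,dt$. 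Inserting $F_x(ht)=g(x)\phi(ht)+o(\phi(ht))$ from (H2)(i) and $\phi(ht)/\phi(h)=\tau_0(t)+o(1)$ uniformly on $[0,1]$ from (H2)(ii), and invoking dominated convergence (justified by the boundedness of $K,K',\tau_0$ and the integrability conditions in (H1)--(H2)), one obtains
$$\mathbb{E}[\varphi(d(x,X_1)/h)]=g(x)\,\phi(h)\Big(\varphi(1)-\int_0^1\varphi'(t)\,\tau_0(t)\,dt\Big)\big(1+o(1)\big).$$
With $\varphi=K$ this gives $\mathbb{E}[\Delta_1(x)]\sim g(x)\phi(h)\,M_1$, bounded away from $0$ since $g(x)>0$; with $\varphi(t)=tK(t)$, for which $\varphi(1)=K(1)$ and $\varphi'(t)=(tK(t))'=(sK(s))'|_{s=t}$ (the hypothesis $|\int_0^1(sK(s))'\tau_0(s)ds|<\infty$ being used here), it gives $\mathbb{E}[d(x,X_1)\Delta_1(x)]\sim h\,g(x)\phi(h)\big(K(1)-\int_0^1(sK(s))'\tau_0(s)ds\big)$.

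Dividing, the factors $g(x)\phi(h)$ cancel and $\widetilde{\mathcal{B}}_n(x)=-\,\frac{h\,\nabla\psi(0)}{M_1}\big(K(1)-\int_0^1(sK(s))'\tau_0(s)ds\big)+o(h)$; multiplying by $-[H^x(\mu)]^{-1}$ --- which exists for $d\ge2$ by the argument recalled right after Proposition \ref{variance} --- then yields $\mathcal{B}_n(x)=-[H^x(\mu)]^{-1}\widetilde{\mathcal{B}}_n(x)$ in the form and order $O(h)$ announced in the statement. I expect the main obstacle to be the last displayed limit: making the interchange of limit and integral rigorous requires the $o(\phi(ht))$ in (H2)(i) and the $o(1)$ in (H2)(ii) to be uniform in $t\in[0,1]$ (or suitably dominated), and one must likewise control the Taylor remainder $\mathbb{E}[d(x,X_1)\,\varepsilon(d(x,X_1))\,\Delta_1(x)]$ uniformly over the shrinking support $\{d(x,X_1)\le h\}$; these are precisely the places where (H1), (H2) and (H5) are genuinely needed, the remaining steps being routine bookkeeping.
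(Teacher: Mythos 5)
Your proof follows essentially the same route as the paper's: reduce $\widetilde{\mathcal{B}}_n(x)$ to a ratio of expectations, condition on the real variable $d(x,X_1)$ via (H5), Taylor-expand $\psi$ around $0$ using $\psi(0)=0$, integrate by parts against the c.d.f.\ of $d(x,X_1)$, and invoke (H2) together with Lemma \ref{lemma1} to produce the $\tau_0$-integrals and the normalizer $M_1$. The only divergence is bookkeeping: you (correctly) retain the minus sign in $\widetilde{\mathcal{B}}_n(x)=\mathbb{E}\bigl[\nabla_u\widetilde G_n^x(\mu)\bigr]=-\mathbb{E}\bigl[\mathcal{U}(Y_1-\mu)\Delta_1(x)\bigr]/\mathbb{E}\bigl[\Delta_1(x)\bigr]$, which the paper's own proof drops, so your final bracket comes out as $K(1)-\int_0^1(sK(s))'\tau_0(s)\,ds$ rather than its negative as displayed in Proposition \ref{B} --- a discrepancy traceable to the paper's sign slip rather than to any flaw in your argument.
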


\noindent The Theorem below gives the asymptotic normality of our estimator.

\begin{theorem}\label{CLT}
Suppose assumptions (H1)-(H5) and condition (\ref{CondThm1})(i) hold. 

If $(n\phi(h))^{\delta/2} \rightarrow \infty ,$ for some $\delta >0$, then:

\begin{itemize}
\item[$(i)$] $\sqrt{n \phi(h)}\; \left(\mu_n(x) - \mu(x) - \mathcal{B}_{n}(x) \right) \stackrel{\mathcal{D}}{\longrightarrow} \mathcal{N}_d\left( 0, \Gamma^x(\mu)\right),$

where
$$\Gamma^x(\mu) = \frac{M_2}{M_1^2 g(x)} \left[ H^x(\mu)\right]^{-1} \Sigma^x(\mu)  \left[ H^x(\mu)\right]^{-1}$$
and
\begin{eqnarray*}
\Sigma^x(\mu) = \mathbb{E}\left\{ \mathcal{U}(Y-\mu)
\;\mathcal{U}^t(Y-\mu) |\; X=x\right\}.
\end{eqnarray*}
\item[$(ii)$] If in addition we impose the following  stronger conditions
on the bandwidth $h_n$:
 $$\sqrt{n\phi(h)} h\longrightarrow 0 \quad \mbox{ as} \quad
n\rightarrow \infty,$$
one gets
$$\sqrt{n \phi(h)}\; \left(\mu_n(x) - \mu(x) \right) \stackrel{\mathcal{D}}{\longrightarrow} \mathcal{N}_d\left( 0, \Gamma^x(\mu)\right).$$
\end{itemize}
\end{theorem}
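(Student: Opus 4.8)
\textbf{Proof proposal for Theorem \ref{CLT}.}

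The plan is to start from the linearization (\ref{deco}), which, modulo $o_{\mathbb{P}}(1)$ terms, expresses $\sqrt{n\phi(h)}(\mu_n-\mu)$ as $-\sqrt{n\phi(h)}\,[H^x(\mu)]^{-1}\bigl(\nabla_u\widetilde G_n^x(\mu)-\mathbb{E}[\nabla_u\widetilde G_n^x(\mu)]\bigr)$ minus $\sqrt{n\phi(h)}\,\mathcal{B}_n(x)$. Since Proposition \ref{variance} already gives $\widetilde H_n^x(\xi_n(j))\to H^x(\mu)$ in probability and the invertibility of $H^x(\mu)$ for $d\ge2$ is guaranteed by \cite{chaudhuri1992}, the only genuinely probabilistic work is a central limit theorem for the centered sum $S_n:=\sqrt{n\phi(h)}\bigl(\nabla_u\widetilde G_n^x(\mu)-\mathbb{E}[\nabla_u\widetilde G_n^x(\mu)]\bigr)$. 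Writing $S_n = -\frac{1}{\sqrt{n\phi(h)}}\sum_{i=1}^n \frac{1}{\mathbb{E}(\Delta_1(x))}\bigl(\mathcal{U}(Y_i-\mu)\Delta_i(x)-\mathbb{E}[\mathcal{U}(Y_i-\mu)\Delta_i(x)]\bigr)$, this is a normalized sum of i.i.d.\ zero-mean random vectors $Z_{n,i}$, so I would apply the Cram\'er--Wold device and a Lyapunov-type CLT for triangular arrays to $\ell^t S_n$ for an arbitrary fixed $\ell\in\ER^d$.

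For the Cram\'er--Wold step I would first compute the asymptotic variance of $\ell^t S_n$. The leading term is $\frac{1}{\phi(h)\,(\mathbb{E}\Delta_1(x))^2}\,\mathrm{Var}\bigl(\ell^t\mathcal{U}(Y_1-\mu)\Delta_1(x)\bigr)$; using (H2)(i)--(ii) one has $\mathbb{E}(\Delta_1(x))\sim M_1\,g(x)\,\phi(h)$ and $\mathbb{E}(\Delta_1^2(x))\sim M_2\,g(x)\,\phi(h)$ (the standard kernel-moment identities coming from the change of variables and the function $\tau_0$), while the conditioning decomposition $\mathbb{E}[\ell^t\mathcal{U}(Y_1-\mu)\Delta_1^2(x)] = \mathbb{E}\bigl[\Delta_1^2(x)\,\mathbb{E}(\ell^t\mathcal{U}(Y-\mu)\mid X)\bigr]$ together with the continuity assumption (H4)(ii) (and $\psi(0)=0$ from the Remark, which kills the cross term $(\mathbb{E}[\ell^t\mathcal{U}(Y_1-\mu)\Delta_1(x)])^2=O(\phi(h)^2)=o(\phi(h))$) yields the limiting variance $\frac{M_2}{M_1^2 g(x)}\,\ell^t\Sigma^x(\mu)\,\ell$. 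Then for the Lyapunov condition I would bound $\sum_i\mathbb{E}|\ell^t Z_{n,i}|^{2+\delta}$: since $|\ell^t\mathcal{U}(Y-\mu)|\le\|\ell\|$ is bounded, $\mathbb{E}|\ell^t\mathcal{U}(Y_1-\mu)\Delta_1(x)|^{2+\delta}=O(\phi(h))$ by the same small-ball argument (using the boundedness of $K$ and (H4)(ii) with the exponents $i+j\delta$), so the Lyapunov ratio is $O\bigl((n\phi(h))^{-\delta/2}\bigr)\to0$ exactly under the stated hypothesis $(n\phi(h))^{\delta/2}\to\infty$. This gives $\ell^t S_n\stackrel{\mathcal{D}}{\to}\mathcal{N}\bigl(0,\frac{M_2}{M_1^2 g(x)}\ell^t\Sigma^x(\mu)\ell\bigr)$ for every $\ell$, hence $S_n\stackrel{\mathcal{D}}{\to}\mathcal{N}_d\bigl(0,\frac{M_2}{M_1^2 g(x)}\Sigma^x(\mu)\bigr)$.

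Combining this with (\ref{deco}), Slutsky's lemma and Proposition \ref{variance} turns the linear representation into $\sqrt{n\phi(h)}(\mu_n-\mu-\mathcal{B}_n(x)) = -[H^x(\mu)]^{-1}S_n + o_{\mathbb{P}}(1)$, and since a linear image of a Gaussian vector is Gaussian with the sandwiched covariance $[H^x(\mu)]^{-1}\bigl(\frac{M_2}{M_1^2 g(x)}\Sigma^x(\mu)\bigr)[H^x(\mu)]^{-1}=\Gamma^x(\mu)$ (using the symmetry of $H^x(\mu)$), part $(i)$ follows. For part $(ii)$ it suffices to observe that Proposition \ref{B} gives $\mathcal{B}_n(x) = O_{a.s.}(h)$, so $\sqrt{n\phi(h)}\,\mathcal{B}_n(x)=O(\sqrt{n\phi(h)}\,h)\to 0$ under the extra bandwidth condition, and another application of Slutsky absorbs the bias into the $o_{\mathbb{P}}(1)$ remainder.

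The main obstacle is making the linear representation (\ref{deco}) rigorous with an $o_{\mathbb{P}}(1)$ remainder rather than merely an informal ``for $n$ large enough'': one must control $\bigl([\widetilde H_n^x(\xi_n(j))]^{-1}-[H^x(\mu)]^{-1}\bigr)\nabla_u\widetilde G_n^x(\mu)$, which requires knowing that $\nabla_u\widetilde G_n^x(\mu)=O_{\mathbb{P}}((n\phi(h))^{-1/2})+\widetilde{\mathcal B}_n(x)$ is of the right order and that the (random) matrices $\widetilde H_n^x(\xi_n(j))$ are eventually uniformly invertible; the continuity of the Hessian (H3)(ii) and the moment bound (H3)(iv) are what permit this, but the bookkeeping of the distinct intermediate points $\xi_n(j)$ for each coordinate $j$, each sandwiched between $\mu$ and $\mu_n$, and of the difference between $\widetilde G_n^x$ and the normalized-weight estimator $G_n^x$, is the delicate part. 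The variance and Lyapunov computations, by contrast, are routine given (H1)--(H2), (H4) and the standard small-ball asymptotics for $\mathbb{E}(\Delta_1^j(x))$.
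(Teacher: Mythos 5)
Your proposal is correct and follows essentially the same route as the paper: the paper's proof of Theorem \ref{CLT} cites exactly the decomposition (\ref{deco}), Proposition \ref{variance}, a Cram\'er--Wold plus Lyapunov CLT for the centered gradient (its Lemma \ref{l1}), the variance identification $\frac{M_2}{M_1^2 g(x)}\ell^t\Sigma^x(\mu)\ell$ (its Lemma \ref{sigma2}), and Proposition \ref{B} with $\sqrt{n\phi(h)}\,h\to 0$ for part (ii). Your closing remark about rigorously controlling the remainder in (\ref{deco}) is a fair observation --- the paper itself treats that step rather informally.
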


\begin{remark}. (i) Notice that the constants $M_1$ and $M_2$ are strictly positive. Indeed making use of
the condition $(H1)$ and the fact that the function
$\tau_0(\cdot)$ is nondecreasing, it suffices to perform a simple
integration by parts. Also, from the point that the conditional
distribution $Y$ given $X=x$ is absolutely continuous, we know
that $\Sigma^x(\mu)$ is definite positive matrix.

\noindent (ii) Whenever $\mathcal{F} = \ER^s$, $s\geq1$, and if
the probability density of the random variable $X$,  say
$g_s(\cdot)$, is of class $\mathcal{C}^1$, then $\phi(h) = V(s)
h^s$, where $V(s)$ is the volume of the  unit ball of $\ER^s$. In
such case, the asymptotic variance expression takes the form

$$\Gamma^x(\mu) = \frac{1}{s g_s(x)} \frac{\int_0^1 K^2(u) u^{s-1} du}{\left(\int_0^1 K(u) u^{s-1} du \right)^2}
\times \left[ H^x(\mu)\right]^{-1} \Sigma^x(\mu)  \left[
H^x(\mu)\right]^{-1}.$$ In such case the central limit theorem has
the form given in the above theorem  with convergence rate
$(nh_n^s)^{1/2}$. Notice that in the finite dimensional case, the
function $\phi(h)$ could decrease to zero as $h\to 0$
exponentially fast and the convergence rate becomes effectively
$(n\phi(h))^{1/2}$. This fact may be used to solve the problem of
the curse of dimensionality (see \cite{masry}, for details). As
an example, consider in an infinite dimensional space setting, the
random process defined by
$$X_t=\theta t+W_t, \quad 0\leq t\leq 1,$$
where $\theta$ is a ${\cal N}(0, 1)$-random variable independent
of the Winer process $W=\{ W_t: 0\leq t\leq 1\}$. It is well-known
(see \cite{lipster}) that the distribution $\nu_X$
of $X$ is absolutely continuous with respect to the Wiener measure
$\nu_X$, which admets  a  Radon-Nikodym density  $f(x)$. In this
case,  hypothesis (H2)(i) is satisfied with
$\phi(h)=\frac{4}{\pi}\exp(-\frac{\pi^2}{8 h^2})$ (see \cite{laib2011} for details). The convergence rate in
Theorem \ref{CLT} being $O(n^{\frac{1-2\alpha}{2}})$ (with $0<\alpha<1/2$) by
taking $h_n :=h=\displaystyle\frac{\pi}{2\sqrt{2}}\frac{1}{\log n^{\alpha}}$.

\end{remark}

Observe now in Theorem \ref{CLT} that the limiting variance
contains the unknown function $g(x)$, therefore the normalization
depends on the function $\phi$ which is not identifiable
explicitly. To make this result operational in practice, we have
to estimate the quantities $\Sigma$, $H$ and $\tau_0.$

\noindent For this purpose, we estimate the conditional variance
matrix $\Sigma^x(\mu)$ of $\nabla_u\widetilde{G}_n^x(\mu)$ by

$$\Sigma_n^x(\mu_n) = \sum_{i=1}^n w_{n,i}(x)\; \mathcal{U}(Y_i-\mu_n)\;\mathcal{U}^t(Y_i-\mu_n),$$

\noindent and the matrix $H^x(\mu)$  by
$$H^x_n(\mu_n) = \sum_{i=1}^n w_{n,i}(x) \mathcal{M}(Y_i,\mu_n).$$

\noindent Making use of the  decomposition of $F_x(u)$ in $
(H2)(i)$, one may estimate $\tau_0(u)$ by

$$\tau_n(u) = \frac{F_{x,n}(uh)}{F_{x,n}(u)}, \quad
\mbox{where} \quad F_{x,n}(u) = \frac{1}{n} \sum_{i=1}^n
\1_{\{d(x,X_i) \leq u \}}.$$
 Subsequently, for a given kernel $K$,
the quantities $M_1$ and $M_2$ are estimated by $M_{1,n}$ and
$M_{2,n}$ respectively replacing $\tau_0$ by $\tau_n$ in their
respective expressions.

\vskip 3mm

 Corollary \ref{TCLP} below, which is a slight
modification of Theorem \ref{CLT}, allows to obtain usefull form of
our results in practice.

\begin{corollary}\label{TCLP}
Assume that conditions of Theorem \ref{CLT} hold true, $K'$ and $(K^2)'$ are integrable functions.
If in addition we suppose that

$nF_x(h) \rightarrow\infty$ \quad and \quad $h^\beta (nF_x(h))^{1/2} \rightarrow 0$, \quad as $n\rightarrow \infty$, \\
\noindent where $\beta$ is specified in the condition $(H3)$, then, for any $x\in\mathcal{F}$ such that $g(x) > 0$, we have
$$\frac{M_{1,n}}{\sqrt{M_{2,n}}}\sqrt{nF_{x,n}(h)} \; \left[\Sigma_n^x(\mu_n)\right]^{-1/2}  H^x_n(\mu_n) \;
\left(\mu_n(x) - \mu(x) \right) \stackrel{\mathcal{D}}{\longrightarrow} \mathcal{N}(0,I_d).$$
\end{corollary}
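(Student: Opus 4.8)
The plan is to deduce Corollary \ref{TCLP} from Theorem \ref{CLT} in three stages: (a) convert the deterministic normalization $\sqrt{n\phi(h)}$ into the data-driven $\sqrt{nF_{x,n}(h)}$ and absorb the bias, (b) prove that the plug-in constants and matrices are consistent, and (c) conclude by Slutsky's theorem together with a direct matrix identity.

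First I would handle the normalization. Under (H2)(i) one has $F_x(h)=\phi(h)g(x)(1+o(1))$, so $nF_x(h)/(n\phi(h))\to g(x)>0$. Since $nF_x(h)\to\infty$, a Bernstein-type exponential inequality for the i.i.d.\ Bernoulli variables $\mathbf{1}_{\{d(x,X_i)\le h\}}$ combined with Borel--Cantelli gives $F_{x,n}(h)/F_x(h)\to 1$ a.s., hence $\sqrt{nF_{x,n}(h)}=\sqrt{g(x)}\,\sqrt{n\phi(h)}\,(1+o_{a.s.}(1))$. Writing $\mu_n-\mu=(\mu_n-\mu-\mathcal{B}_n(x))+\mathcal{B}_n(x)$ and multiplying by $\sqrt{nF_{x,n}(h)}$, the first term converges in law to $\mathcal{N}_d(0,g(x)\Gamma^x(\mu))$ by Theorem \ref{CLT}(i) and Slutsky, while the bias term satisfies $\|\sqrt{nF_{x,n}(h)}\,\mathcal{B}_n(x)\|=O\!\left(h^{\beta}\sqrt{nF_x(h)}\right)\to 0$, using Proposition \ref{B} (which gives $\mathcal{B}_n(x)=O(h)$, and $h\le h^{\beta}$ for $h$ small in the usual range $0<\beta\le 1$) and the imposed bandwidth condition $h^{\beta}(nF_x(h))^{1/2}\to 0$. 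Therefore $\sqrt{nF_{x,n}(h)}\,(\mu_n-\mu)\stackrel{\mathcal{D}}{\longrightarrow}\mathcal{N}_d(0,g(x)\Gamma^x(\mu))$, and note $g(x)\Gamma^x(\mu)=\frac{M_2}{M_1^2}[H^x(\mu)]^{-1}\Sigma^x(\mu)[H^x(\mu)]^{-1}$.

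Second I would establish consistency of the estimated ingredients. Applying the a.s.\ ratio convergence of the previous step also at radii $uh$, $u\in[0,1]$, gives uniform convergence of the empirical small-ball ratio to $\tau_0$ on $[0,1]$; together with the integrability of $K'$ and $(K^2)'$ and the bound $\tau_n\le 1$, dominated convergence yields $M_{1,n}\to M_1$ and $M_{2,n}\to M_2$, both strictly positive by the Remark following Theorem \ref{CLT}. For the matrices, I would first show, exactly as in Proposition \ref{resG}/Proposition \ref{variance} but applied entrywise to $\mathcal{U}_k\mathcal{U}_j$ and $\mathcal{M}_{k,j}$, that $\Sigma_n^x(u)\to\Sigma^x(u)$ and $H_n^x(u)\to H^x(u)$ uniformly for $u$ in a neighbourhood of $\mu(x)$, using the moment/continuity conditions (H3)(ii)--(iv) and (H4); then, since $\mu_n(x)\to\mu(x)$ a.s.\ by Theorem \ref{thm2} and $u\mapsto\Sigma^x(u),H^x(u)$ are continuous near $\mu(x)$, it follows that $\Sigma_n^x(\mu_n)\stackrel{\mathbb{P}}{\to}\Sigma^x(\mu)$ and $H_n^x(\mu_n)\stackrel{\mathbb{P}}{\to}H^x(\mu)$. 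Because $\Sigma^x(\mu)$ and $H^x(\mu)$ are symmetric positive definite, $\Sigma_n^x(\mu_n)$ is invertible with probability tending to one and $[\Sigma_n^x(\mu_n)]^{-1/2}\stackrel{\mathbb{P}}{\to}[\Sigma^x(\mu)]^{-1/2}$ by continuity of the matrix square root and inverse on the positive-definite cone.

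Finally, set $A_n:=\frac{M_{1,n}}{\sqrt{M_{2,n}}}[\Sigma_n^x(\mu_n)]^{-1/2}H_n^x(\mu_n)$; by the above $A_n\stackrel{\mathbb{P}}{\to}A:=\frac{M_1}{\sqrt{M_2}}[\Sigma^x(\mu)]^{-1/2}H^x(\mu)$. Slutsky's theorem applied to $A_n\,\sqrt{nF_{x,n}(h)}\,(\mu_n-\mu)$ then gives a limit $\mathcal{N}_d(0,A\,g(x)\Gamma^x(\mu)\,A^t)$, and since $H^x(\mu)$ and $[\Sigma^x(\mu)]^{-1/2}$ are symmetric one computes directly
$$A\,g(x)\,\Gamma^x(\mu)\,A^t=[\Sigma^x(\mu)]^{-1/2}\,H^x(\mu)\,[H^x(\mu)]^{-1}\,\Sigma^x(\mu)\,[H^x(\mu)]^{-1}\,H^x(\mu)\,[\Sigma^x(\mu)]^{-1/2}=I_d ,$$
which is the assertion. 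The main obstacle is the second stage: obtaining the uniform-in-$u$ consistency of $\Sigma_n^x(\cdot)$ and $H_n^x(\cdot)$ and then evaluating at the random argument $\mu_n$, which requires a uniform law of large numbers for functional kernel regression in the spirit of Proposition \ref{resG} together with careful use of the continuity hypotheses (H3)--(H4); the normalization bookkeeping in stage (a) and the final matrix identity are routine.
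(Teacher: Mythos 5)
Your proposal is correct and follows essentially the same route as the paper: a Slutsky argument reducing the statement to Theorem \ref{CLT}, the a.s.\ consistency of the empirical small-ball probability $F_{x,n}(h)$ and of $M_{1,n},M_{2,n}$, the consistency of $H_n^x(\mu_n)$ and $\Sigma_n^x(\mu_n)$ (which the paper obtains by recycling the decomposition of Proposition \ref{variance} through $\widetilde H_n^x/G_{n,1}^x$ rather than via your uniform-in-$u$ law of large numbers plus continuity, but to the same effect), and the final matrix identity $A\,g(x)\Gamma^x(\mu)\,A^t=I_d$. The only cosmetic difference is that the paper applies Theorem \ref{CLT}(ii) directly (so the bias is already gone), whereas you start from part (i) and kill $\sqrt{nF_{x,n}(h)}\,\mathcal{B}_n(x)$ using the corollary's condition $h^{\beta}(nF_x(h))^{1/2}\to 0$ together with the (harmless, but worth flagging) extra reading $\beta\le 1$ so that $h\le h^{\beta}$.
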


\subsection{Building Conditional confidence region of $\mu(x)$}

\noindent From Corollary \ref{TCLP}, we can easily see that

$$ \left(\mu_n(x) - \mu(x) \right)^t \left[\Gamma_n^{x}(\mu_n)\right]^{-1} \left(\mu_n(x) - \mu(x) \right) \stackrel{\mathcal{D}}{\longrightarrow} \chi_d^2,$$

\noindent where
$$\left[\Gamma_n^{x}(\mu_n)\right]^{-1} = \frac{M_{1,n}^2 n F_{x,n}(h)}{M_{2,n}}\;H_n^x(\mu_n) \left[\Sigma_n^x(\mu_n) \right]^{-1} H_n^x(\mu_n).$$
Then, the asymptotic $100(1-\alpha)\%$ $(\alpha \in (0,1))$ conditional confidence region for $\mu(x)$ is given by
\begin{eqnarray}\label{ellipse}
 \left(\mu_n(x) - \mu(x) \right)^t \left[\Gamma_n^{x}(\mu_n)\right]^{-1} \left(\mu_n(x) - \mu(x) \right) \leq \chi_d^2(\alpha),
\end{eqnarray}

\noindent where $\chi_d^2(\alpha)$ denotes the $100(1-\alpha)$-th percentile of a chi-squared distribution with $d$ degrees of freedom.


\section{Numerical study}

This section is divided in two parts, in the first one we are interesting in the estimation of conditional confidence ellipsoid of the multivariate $L_{1}$-median regression. The second part is devoted to an application to chemiometrical real data and it consists in predicting a three-dimensional vector.

\subsection{Simulation example}
Let us consider a bi-dimensional vector ${\bf Y} =(Y_1, Y_2) \in \ER^2$ and $X(t)$ is a Brownian motion trajectories defined on $[0,1]$. The eigenfunctions of the covariance operator of $X$ are known to be (see \cite{ash}), for $j=1,2, \dots$
\begin{eqnarray*}
f_j(t) = \sqrt{2} \sin \{(j-0.5)\pi t\}, \; t\in[0,1].
\end{eqnarray*}
Let $(f_1(t))_{t\in [0,1]}$ (resp. $(f_2(t))_{t \in [0,1]}$) be the first (resp. the second) eigenfunction corresponding to the first (resp. second) greater eigenvalue of the covariance operator of $X$. It is well known that $f_1(t)$ and $f_2(t)$ are orthogonal by construction, i.e. $<f_1,f_2> :=\int_{0}^1 f_{1}(t) f_{2}(t) = 0.$


\noindent We modelize then the dependence between ${\bf Y}$ and $X$ by the following model:
\begin{itemize}
\item $Y^1 = \int_0^1 f_1(t) X(t)\; dt + \epsilon$
\item $Y^2 = \int_0^1 f_2(t) X(t)\; dt + \epsilon$
\end{itemize}
where $\epsilon$ is a standard normal random variable.

\begin{figure}[ht!]
\begin{center}
\includegraphics[height=6.5cm,width=17cm]{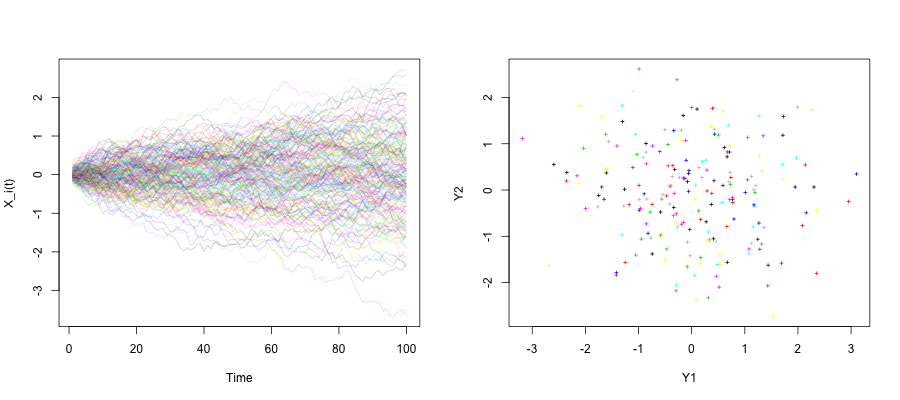}
\end{center}
\caption{Sample of 200 simulated couples of observations $(X_i, {\bf Y}_i)_{i=1,\dots,200}$. The left box contains the covariates $X_i$ and in the right one we present their associated vectors ${\bf Y}_i$. }
\label{example}
\end{figure}

\noindent We have simulated $n=200, 700$ independent realizations $(X_i,{\bf Y}_i)$, $i=1, \dots,n.$ To deal with the Brownian random functions $X_i(t)$, their sample were discretized by 100 points equispaced in $[0,1]$. In Figure \ref{example},  we plot a 200 simulated couples $(X_i,{\bf Y}_i)_{i=1,\dots,200}$ as described above. The left box contains the covariates $X_i$ and in the right one we present the associated vectors ${\bf Y}_i=(Y_{i}^1,Y_{i}^2)$.

We aim to assess, for a fixed curve $X=x$, the performance of the asymptotic conditional confidence ellipsoid given by (\ref{ellipse}) in finite sample. For that we have first to estimate $\mu(x)$. Three parameters should be fixed in this step: the kernel $K$, the bandwidth $h$ and the semimetric $d(\cdot, \cdot)$ which measure the similarity between curves.

\noindent {\bf Choice of the kernel}:  there are many possible density kernel functions. Specialists in non-parametric estimation agree that the exact form of the kernel function does not greatly affect the final estimate with regard to the choice of the bandwidth. In this section, the so-called Gaussian kernel will be used, which is defined by $K(u) = (2\pi)^{-1/2} \exp(-u^2/2)$, for $u\in \ER$.

\noindent {\bf Choice of the bandwidth $h_{n}$}: the bandwidth determines the smoothness of the estimator. The problem of the choice of the bandwidth has been widely studies in non-parametric literature. Recently \cite{rachdi} have proposed a data-driven criterion for choosing this smoothing parameter. The proposed criterion can be formulated in terms of a functional version of cross-validation ideas. \cite{antoniadis} treated the same problem in the context of time series prediction. In the following, the bandwidth $h_{n}$ is selected by $L_{1}$ cross-validation method:
\begin{eqnarray}
\widehat{h}_{n, opt} = \arg\min_{h >0} \sum_{i=1}^n \left\| {\bf Y}_{i} - \widehat{\mu}_{(-i)}(x_{i})\right\|.
\end{eqnarray}

\noindent {\bf Choice of the semi-metric $d(\cdot, \cdot)$}: because of the roughness of our covariate curves we chose a semi-metric computed with the functional principal components analysis with dimension $q=2$.

\begin{figure}[ht!]
\begin{center}
\includegraphics[height=6.5cm,width=12cm]{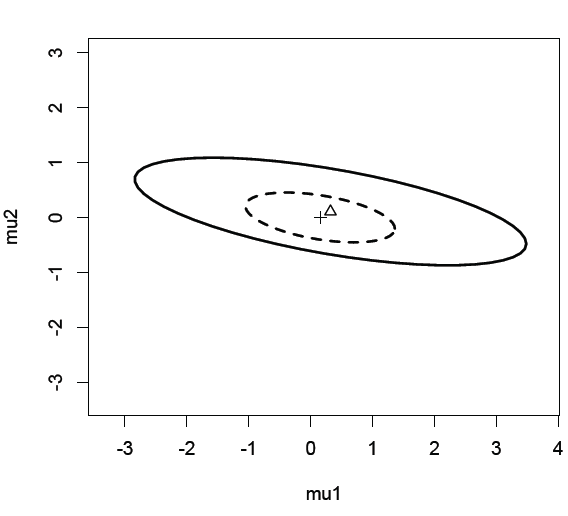}
\end{center}
\caption{Confidence ellipsoid of $\mu(x)$ when $n=200$ (solid lines) and $n=700$ (dashed lines); the centers of the ellipses at $(\mu_{n}^1(x), \mu_{n}^2(x))$ are denoted by triangle (n=200) and cross (n=700).}
\label{conf}
\end{figure}

In Figure \ref{conf}, we plot the $95\%$ confidence ellipses of $\mu(x)$ when $x=0_{\mathcal{F}}$. We can remark from Figure \ref{conf} that the lengths of the major and the minor axes of the confidence ellipse decrease when the sample size $n$ increases. Similar results were obtained for other sample sizes $n$ and values of the curve $x.$

\subsection{Application to Chemiometrical data prediction}
The purpose of this section is to apply our method based on multivariate $L_1$-median regression to some chemiometrical real data and to compare our results to those obtained by other definitions of conditional median studied in literature. For that, we used a sample of spectrometric data available on the web site: http://lib.stat.cmu.edu/datasets/tecator. We have a sample of $n=215$ pieces of meat and for each unit $i$, we observe one spectrometric discretized curve $X_i(\lambda)$ which corresponds to the absorbance measured at a grid of 100 wavelengths (i.e. $X_i(\lambda) = (X_i(\lambda_1), X_i(\lambda_2), \dots, X_i(\lambda_{100}))$).  Figure (\ref{w}) plots the spectrometric curves. Moreover, for each unit $i$, we have at hand its  Moisture content $(Y^1)$, Fat content $(Y^2)$ and Protein content $(Y^3)$ obtained by analytical chemical processing.

\begin{figure}[ht!]
\begin{center}
\includegraphics[height=6.5cm,width=14cm]{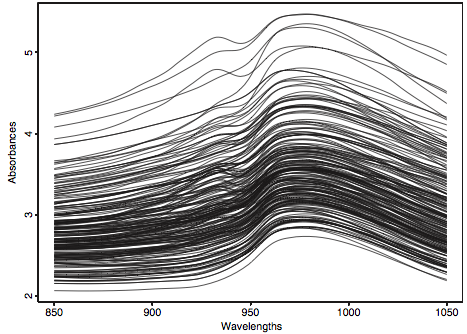}
\end{center}
\caption{The 215 spectrometric curves.}
\label{w}
\end{figure}
\begin{figure}[ht!]
\begin{center}
\includegraphics[height=6cm,width=15cm]{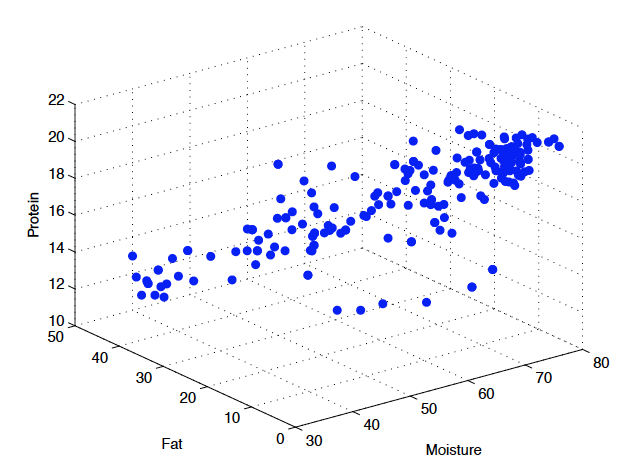}
\end{center}
\caption{The sample of 215 piece of meat.}
\label{vectY}
\end{figure}

Let us denote by ${\bf Y}=(moisture, fat, protein)^t
:=(Y^1,Y^2,Y^3)^t$ the vector of specific chemical contents of meat. Given a new spectrometric curve $X_{new}(\lambda)$, our purpose is to
predict simultaneously the corresponding vector of chemical contents
$\widehat {\bf Y}$ using the multivariate $L_1$-median regression.
Obtaining a spectrometric curve is less expensive (in terms of time and cost) than analytical chemistry needed for determining the percentage of chemical contents. So, it is an important economic challenge to predict the hole vector ${\bf Y}$ from the spectrometric curve.

Let us consider 215 observations $(X_1(\lambda),{\bf Y}_1), \dots, (X_{215}(\lambda), {\bf Y}_{215})$ split into two samples: learning sample (160 observations) and test sample (55 observations). We compare the following three methods, based on multivariate conditional median, to predict the vector of chemical contents ${\bf Y}$ of the test sample. In the following three approaches, we choose the quadratic kernel $K$ defined by:
\begin{eqnarray*}
K(u) = \frac{3}{2}(1-u^2) \1_{[0,1]}.
\end{eqnarray*}
\begin{itemize}
\item[$(i)$] {\bf Non-functional approach (NF)}
\end{itemize}
This method is based on the definition of conditional spatial median studied by \cite{gannoun20032} and \cite{cheng}. This approach does not consider the covariate $X$ as a function but a vector of dimension 100 while the response variable ${\bf Y}$ is a vector.
For each $i=1,\dots, 160$ in the learning sample, the $i^{th}$ vector ${\bf Y}_i$ is predicted as follow:
$$
\widehat {\bf Y}_i= \widehat \mu^{NF}(X_i),
$$
where
$$
\widehat\mu^{NF}(X_i) = \argmin_{u\in\ER^3} \sum_{j=1}^{160} w_{n,j}^{NF}(X_i) \|Y_j-u\|,
$$
\noindent and $w_{n,j}^{NF}(X_i) = \displaystyle K\left(\frac{X_i - X_j}{h_n}\right)\Big/\sum_{j=1}^nK\left(\frac{X_i - X_j}{h_n}\right)$ are the so-called Nadaraya-Watson weights. For the choice of the bandwidth $h_{n}$, \cite{cheng}  gave the exact expression of the optimal bandwidth that minimizes the asymptotic mean square error. In this case $h_{n}$ is of the rate $n^{(-1/104+\epsilon)}$, where $\epsilon>0$ is a sufficiently small constant.

\begin{itemize}
\item[$(ii)$] {\bf Vector Coordinate Conditional Median (VCCM)}
\end{itemize}
This approach supposes that the covariate $X$ is considered as functional. For each $i=1,\dots, 160$ in the learning sample, we predict each component of its vector response ${\bf Y}_i$ by the one-dimensional conditional median. Then we obtain the vector of coordinate conditional medians (VCCMs) defined as
$$\widehat {\bf Y}_i = (\widehat \mu^1(X_i), \widehat \mu^2(X_i), \widehat \mu^3(X_i)),$$

\noindent where each component $\widehat \mu^j(X_i) = (\widehat{F}^j)^{-1}(1/2 \mid X_i)$ is the one-dimensional conditional median estimator.

$\widehat F^j (\cdot \mid X_i)$ is the conditional distribution function estimator of the component $Y^j$ given $X=X_i$. \cite{ferraty2006}, p. 56, have proposed a Nadaraya-Watson kernel estimator of  the conditional distribution, $F^j (\cdot \mid X=X_i)$, when covariate takes values in some infinite dimensional space.  This estimator is given by
$$
\widehat F^j(y^j \mid X=X_i) = \sum_{k=1}^{160} \1_{\{Y_{k}^j \leq y^j\}} K(d(X_i,X_k)/h_n) \Big/ \sum_{k=1}^{160} K(d(X_i,X_k)/h_n) , \quad y^j \in \ER.
$$
\noindent To apply this approach, we used the Ferraty and Vieu's R/routine {\it funopare.quantile.lcv}\footnote{Available at the website www.lsp.ups-tlse.fr/staph/npfda.} to estimate $\widehat \mu^j(X_i).$ The optimal bandwidth is chosen by the cross-validation method on the $k$ nearest neighbours (see \cite{ferraty2006}, p.102 for more details).

\begin{itemize}
\item[$(iii)$] {\bf Conditional Multivariate Median (CMM)}
\end{itemize}
The approach that we propose here supposes the covariate $X$ is a curve and the response $Y$ is a vector.
For each $i=1,\dots, 160$ in the learning sample we take $$\widehat {\bf Y}_i= \widehat \mu(X_i),$$
where
\begin{eqnarray}\label{spatial}
\widehat \mu(X_i)= \argmin_{u\in\ER^3} \sum_{j=1}^{160} w_{n,j}(X_i) \|Y_j-u\|.
\end{eqnarray}

\noindent To estimate the conditional multivariate median, $\widehat\mu(X_i)$, we have adapted the algorithm proposed by \cite{vardi} to the conditional case and used the function \texttt{spatial.median} from the R package \texttt{ICSNP}. As in the previous approach, the optimal bandwidth is chosen by the cross-validation method on the $k$ nearest neighbours.\\

\noindent {\bf A common evaluation procedure}:\\

 We have adapted, to the multivariate case, the algorithm proposed by \cite{attouch} and \cite{ferraty2006}, p.103) in order to get the optimal smoothing parameter $h_n$ for each $X_i$ in the test sample.

\begin{itemize}
\item[$Step1.$] We compute the kernel estimator $\widehat\mu(X_j)$ (resp. $\widehat \mu^k(X_j)$), for all $j$ by using the training sample.
\item[$Step2.$] For each $X_i$ in the test sample, we set $i_\star = \argmin_{j=1,\dots,160} d(X_i,X_j).$
\item[$Step3.$] For each $i=161, \dots,215$, we take
$$\widehat\mu(X_i) = \widehat\mu(X_{i\star}) \quad\mbox{and} \quad \widehat\mu^k(X_i) = \widehat\mu^k(X_{i\star}).$$
\end{itemize}

The used bandwidth for each curve $X_{i}$ in the test sample is the one obtained for the nearest curve in the learning sample. Because the spectrometric curves presented in Figure (\ref{w}) are very smooth, we can choose as semi-metric $d(\cdot,\cdot)$ the $L_2$ distance between the second derivative of the curves. This choice has been made by \cite{attouch} and \cite{ferraty2007} for the same spectrometric curves.

\noindent Both (CMM) and (NF) methods take into account the covariance structure between variables of  of the vector ${\bf Y}$. In fact, the correlation coefficients between $Y_1=moisture$, $Y_2=fat$ and $Y_3=protein$ are given by $\rho_{1,2} = -0.988$, $\rho_{1,3} = 0.814$ and $\rho_{2,3} = -0.860$. As we can see moisture, fat and protein contents in meat are strongly correlated then it will be more appropriate to predict these variables simultaneously rather than each one separately. \\
To compare (CMM), (NF) and (VCCM) methods, we are based on the following criterias:

\begin{itemize}
\item The Absolute Error (AE) gives idea about the prediction of each component of ${\bf Y}$
$$AE_i^j = |Y_i^j - \widehat{\mathcal{L}}^j(X_i)|, \quad\quad \forall i=161,\dots,215\quad \mbox{and} \;\; j=1,2,3.$$
\item A global criteria ($R$) gives idea about error made to predict the vector ${\bf Y}_i$ (for $i=161,\dots, 215$)
 $$R(\widehat {\bf Y}_i)= \|{\bf Y}_i- \widehat{\mathcal{L}}(X_i)\|_{Eucl}$$
\end{itemize}
where $\widehat{\mathcal{L}} := (\widehat{\mathcal{L}}^1, \widehat{\mathcal{L}}^2, \widehat{\mathcal{L}}^3)^t$ represents the estimator of each component of the vector ${\bf Y}$ obtained by (VCCM), (NF) or (CMM) method.\\

\begin{table}
\begin{center}
\begin{tabular}{|l|cccc||cccc||cccc|}
\hline
 & \multicolumn{4}{c}{CMM} & \multicolumn{4}{c}{VCCM} &  \multicolumn{4}{c|}{NF} \\
\cline{2-13}
 & Mean & $Q_{0.25}$ & $Q_{0.5}$ & $Q_{0.75}$ & Mean & $Q_{0.25}$ & $Q_{0.5}$ & $Q_{0.75}$ & Mean & $Q_{0.25}$ & $Q_{0.5}$ & $Q_{0.75}$  \\
\hline
Moist. & 1.301 & 0.479 & 1.100 & 2.202 & 1.776 & 0.460 & 1.879 & 2.383& 7.222 & 1.663 & 6.374& 11.44\\
Fat & 1.565 & 0.430 & 1.500 & 2.401 & 2.343 & 0.925 & 1.716 & 2.867 & 9.758 & 2.328 & 8.4 & 15.24\\
Prot. & 1.125 & 0.300 & 0.800 & 1.437 & 1.313 & 0.518 & 1.182 & 1.806 & 2.446 & 0.787 & 2.329 & 3.394 \\
$R(\widehat Y)$ & 2.638 & 1.349 & 2.530 & 3.623 & 3.561 & 1.877 & 2.909 & 3.799 & 12.6 & 3.523 & 10.6 & 19.27 \\
\hline
\end{tabular}
\caption{Distribution of absolute errors for Moisture, Fat and Protein and global estimation error of the vector ${\bf Y}$.}
\label{table}
\end{center}
\end{table}
\noindent We can conclude from table \ref{table} that our method is more appropriate to predict meat components than (VCCM). In fact, the (VCCM) approach predicts each component of ${\bf Y}$ separately using conditional univariate median. This method supposes independence of the components of ${\bf Y}$ and doesn't take into account the correlation structure between variables. The Non-Functional approach gives the most important prediction errors and this is because of the dimension of the covariate (100 in this case). This problem is well-known in nonparametric estimation as curse of dimensionality. Taking into account the functional aspect of the covariate seems to be necessary in such case.

\section{Concluding remarks}
In this paper, we have introduced a kernel-based estimator for the $L_1$-median of a
multivariate conditional distribution when covariates take values in
an infinite-dimensional space. Prediction using the least square estimates of regression parameters is highly sensitive to outlying points. Therefore, there is no doubt that conditional $L_1$-median can be used to make prediction. We have shown that our estimator is well adapted to predict a multivariate response vector. In fact, in contrast to the
Vector Coordinate Conditional Median method, the multivariate conditional $L_1$-median takes into
account the inter-dependance of the coordinates of the response
vector. Asymptotic results, i.e., almost sure consistency and asymptotic normality, has been given under some regularity conditions. Many extensions can be given to this work. For instance, the same type of theoretical results could be obtained in a non-independence framework (e.g. mixing dependence). Furthermore, it is well known that quantiles are very useful tools to detect outliers and to modelize the dependence of the covariates in lower and upper tails of the response distribution. In future work, we aim to generalize our study to the multivariate quantiles regression when covariates take values in some infinite dimensional space. 
\section*{Appendix: Proofs}

\noindent In order to prove  our results we have to introduce some
further notations. Let
$$\overline G^x_{n, 2}(u) = \mathbb{E}\left( G_{n,2}^x (u)\right) := \frac{1}{\mathbb{E}\Delta_1(x)} \mathbb{E}\left[\|Y_1 - u\|\Delta_1(x)\right],$$
and define the bias of $G^x_n(u)$  as
$$B_n^x(u) = \overline G^x_{n, 2}(u) - G^x(u).$$
\noindent Consider now the following quantities
$$R_n^x(u) = - B_n^x(u) \left(G^x_{n,1} - 1\right)$$
and
$$Q_n^x(u) = \left(G^x_{n,2}(u)  - \overline G_{n, 2}^x(u) \right)- G^x(u)\left(G^x_{n,1}  - 1 \right).$$
It is then clear that the following decomposition holds

\begin{equation}\label{decomp1}
G^x_n(u) - G^x(u) = B_n^x(u) + \frac{R_n^x(u) + Q_n^x(u) }{
G^x_{n,1}}.
\end{equation}

Since $ G^x_{n,1}$ is independent of $u$, it follows from
decomposition  (\ref{decomp1})  that

\begin{equation}\label{decomp2} %
\sup_{u\in \mathbb{R}^d}|G^x_n(u) - G^x(u)|  \leq \sup_{u\in
\mathbb{R}^d}|B_n^x(u)| + \frac{ \sup_{u\in
\mathbb{R}^d}|R_n^x(u)| + \sup_{u\in \mathbb{R}^d}|Q_n^x(u)| }{
G^x_{n,1}}.
\end{equation}

The proof of Proposition \ref{resG} is split up into several
lemmas,
 given hereafter, establishing respectively the convergence almost surely (a.s.)  of $G^x_{n,1}$ to
 $1$ and that of $B_n^x(u)$, $R_n^x(u)$
  and $Q_n^x(u)$ (with rate) to zero.

  We start by the following technical lemma whose  proof my be found in
Ferraty et {\it al.} (2007).

\begin{lemma}\label{lemma1}
Assume that conditions (H1),(H2)  hold true. For any real numbers
$j \geq 1$ and $k \geq 1,$ as $n \rightarrow \infty$, we have

\begin{itemize}
\item[$(i)$] $\displaystyle\frac{1}{\phi(h)} \mathbb{E}[\Delta_1^j(x)] = M_j g(x) + o(1)$
\item[$(ii)$] $\displaystyle \frac{1}{\phi^k(h)} (\mathbb{E}(\Delta_1(x)))^k = M_1^k g^k(x) + o(1).$
\end{itemize}
\end{lemma}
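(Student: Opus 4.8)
The plan is to reduce the expectation $\mathbb{E}[\Delta_1^j(x)]$ to a one‑dimensional Lebesgue--Stieltjes integral against the law of the real random variable $d(x,X_1)$ and then invoke the concentration hypothesis (H2). Write $\Delta_1(x)=K(d(x,X_1)/h)$ and let $F_x(t):=\mathbb{P}(d(x,X_1)\le t)$. Since $K$ is supported on $[0,1]$ (by (H1)), only the event $\{d(x,X_1)\le h\}$ contributes, so after the substitution $s=d(x,X_1)/h$,
$$\mathbb{E}[\Delta_1^j(x)]=\int_{[0,1]}K^j(s)\,dF_x(hs).$$
Note that (H2)(i) forces $F_x(h)\to 0$ as $h\downarrow 0$, hence $F_x(0)=0$, so the boundary term at $s=0$ in an integration by parts vanishes; using that $K$, and therefore $K^j$, is of class $\mathcal{C}^1$ on $[0,1]$, an integration by parts gives
$$\mathbb{E}[\Delta_1^j(x)]=K^j(1)\,F_x(h)-\int_0^1 (K^j)'(s)\,F_x(hs)\,ds.$$

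Next I would divide by $\phi(h)$ and take limits term by term. The first term is immediate: by (H2)(i), $F_x(h)/\phi(h)\to g(x)$. For the integral, I would use the factorization
$$\frac{F_x(hs)}{\phi(h)}=\frac{F_x(hs)}{\phi(hs)}\cdot\frac{\phi(hs)}{\phi(h)},$$
where, for each fixed $s\in(0,1]$, the first factor tends to $g(x)$ by (H2)(i) applied at radius $hs\downarrow 0$, and the second factor tends to $\tau_0(s)$ uniformly in $s$ by (H2)(ii); thus the integrand converges pointwise to $g(x)\,\tau_0(s)\,(K^j)'(s)$. To pass the limit under the integral sign I would dominate: since $F_x$ is nondecreasing and $hs\le h$, we have $F_x(hs)/\phi(h)\le F_x(h)/\phi(h)$, which is bounded (by $2g(x)$, say) for $h$ small, while $|(K^j)'|$ is integrable on $[0,1]$ by (H1) (using $K'<0$, so $|\int_0^1(K^j)'|=\int_0^1|(K^j)'|<\infty$). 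The dominated convergence theorem then yields
$$\int_0^1 (K^j)'(s)\,\frac{F_x(hs)}{\phi(h)}\,ds\longrightarrow g(x)\int_0^1 (K^j)'(s)\,\tau_0(s)\,ds,$$
the right‑hand side being finite by (H2)(ii). Combining the two pieces gives
$$\frac{1}{\phi(h)}\,\mathbb{E}[\Delta_1^j(x)]\longrightarrow g(x)\Big(K^j(1)-\int_0^1 (K^j)'(s)\,\tau_0(s)\,ds\Big)=M_j\,g(x),$$
which is part $(i)$.

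Part $(ii)$ then follows by specialising $(i)$ to $j=1$, so that $\mathbb{E}(\Delta_1(x))/\phi(h)=M_1 g(x)+o(1)$, and raising to the power $k$: since $t\mapsto t^k$ is continuous and $M_1 g(x)$ is a fixed finite constant, $(M_1 g(x)+o(1))^k=M_1^k g^k(x)+o(1)$, as claimed. The only genuinely delicate step is the exchange of limit and integral in the second paragraph: one must ensure the domination is uniform in $h$ near $0$ and, in particular, control the contribution of a neighbourhood of $s=0$, where $F_x(hs)/\phi(hs)$ is not yet close to $g(x)$ — this is exactly where the monotonicity bound $F_x(hs)\le F_x(h)$ and the integrability of $(K^j)'$ are needed. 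Everything else is routine.
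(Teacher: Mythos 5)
Your proof is correct, and it follows essentially the same route as the source the paper relies on: the paper does not prove this lemma itself but cites Ferraty, Mas and Vieu (2007), whose argument is precisely the reduction to a Stieltjes integral in $d(x,X_1)$, integration by parts against $F_x(h\cdot)$, and passage to the limit via (H2)(i)--(ii) with a domination argument. Nothing further is needed.
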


%

Lemma below gives the convergence rate of the quantity
$G^x_{n,1}$.

\begin{lemma}\label{lemma2}
Under assumptions (H1)-(H2) and condition (\ref{CondThm1})(i), we
have
 $$G^x_{n,1} - 1 = O_{a.s}\left(\sqrt{\displaystyle\frac{\log n}{n\phi(h)}} \right).$$
\end{lemma}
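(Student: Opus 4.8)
The plan is to write $G^x_{n,1} = \frac{1}{n\,\mathbb{E}(\Delta_1(x))}\sum_{i=1}^n \Delta_i(x)$ and control it by a centering-plus-deviation argument. First I would observe that, since the weights are already normalized by their expectation, $\mathbb{E}(G^x_{n,1}) = 1$ exactly, so that $G^x_{n,1} - 1 = \frac{1}{n}\sum_{i=1}^n Z_{n,i}$ where $Z_{n,i} := \frac{\Delta_i(x)}{\mathbb{E}(\Delta_1(x))} - 1$ are i.i.d., centered, and bounded by a constant multiple of $\frac{1}{\mathbb{E}(\Delta_1(x))}$ because $K$ is bounded and supported on $[0,1]$ (condition (H1)). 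By Lemma \ref{lemma1}(i) with $j=1$ and the fact that $g(x)>0$, we have $\mathbb{E}(\Delta_1(x)) \sim M_1 g(x)\,\phi(h)$, hence $|Z_{n,i}| \le C/\phi(h)$ a.s. for $n$ large. Likewise $\mathrm{Var}(Z_{n,1}) = \mathbb{E}(\Delta_1^2(x))/(\mathbb{E}(\Delta_1(x)))^2 - 1 = O(1/\phi(h))$, again by Lemma \ref{lemma1}.

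The second step is a Bernstein-type exponential inequality for the bounded i.i.d. sum $\sum_{i=1}^n Z_{n,i}$: for $\varepsilon>0$,
$$
\mathbb{P}\!\left(|G^x_{n,1}-1| > \varepsilon\right) \le 2\exp\!\left(-\frac{n\varepsilon^2/2}{\sigma_n^2 + C\varepsilon/(3\phi(h))}\right),
$$
where $\sigma_n^2 = \mathrm{Var}(Z_{n,1}) = O(1/\phi(h))$. Choosing $\varepsilon = \varepsilon_0 \sqrt{\log n /(n\phi(h))}$ for a large enough constant $\varepsilon_0$, and using condition (\ref{CondThm1})(i) (namely $\log n/(n\phi(h))\to 0$, so that $\varepsilon\to 0$ and the $\varepsilon/\phi(h)$ term in the denominator is dominated by $\sigma_n^2$), the exponent is bounded above by $-c\,\varepsilon_0^2 \log n$ for some $c>0$, which makes the bound summable in $n$ once $\varepsilon_0$ is taken large enough. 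The Borel–Cantelli lemma then yields $|G^x_{n,1}-1| = O_{a.s.}\!\big(\sqrt{\log n/(n\phi(h))}\big)$, which is the claim.

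The only mildly delicate point is bookkeeping the $n$-dependence of the summands $Z_{n,i}$ (it is a triangular array, not a single sequence), so the exponential inequality must be applied with constants uniform in $n$; this is fine because the bound $|Z_{n,i}|\le C/\phi(h)$ and the variance estimate $\sigma_n^2 \le C/\phi(h)$ both come with absolute constants from Lemma \ref{lemma1}, and condition (\ref{CondThm1})(i) guarantees the chosen $\varepsilon$ tends to zero so that the Bernstein denominator is controlled uniformly. No genuine obstacle is expected here — the argument is the standard kernel-estimator large-deviation computation, and Lemma \ref{lemma1} has already packaged the small-ball asymptotics we need.
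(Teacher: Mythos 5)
Your proposal is correct and follows essentially the same route as the paper: the same centering $G^x_{n,1}-1=\frac{1}{n}\sum_i\bigl(\Delta_i^{\star}(x)-\mathbb{E}\Delta_i^{\star}(x)\bigr)$, an exponential inequality with scale $a^2=(\phi(h))^{-1}$, the choice $\epsilon=\epsilon_0\sqrt{\log n/(n\phi(h))}$, and Borel--Cantelli. The only (immaterial) difference is that you invoke the classical bounded-variable Bernstein inequality directly, whereas the paper verifies the moment growth condition $\mathbb{E}|L_{n,1}|^m\leq C_m(\phi(h))^{1-m}$ in order to apply Corollary A.8(i) of Ferraty and Vieu (2006), which is the same Bernstein-type bound packaged under moment hypotheses.
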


\begin{proof} of Lemma \ref{lemma2}. Let us denote by
$$R^x_{n,1} = G^x_{n,1} - 1 := \frac{1}{n} \sum_{i=1}^n L_{n,i}(x),$$
where $L_{n,i}(x) = \Delta_i^{\star}(x)-\mathbb{E}(\Delta_i^{\star}(x))$ and $\Delta_i^{\star}(x) = \frac{\Delta_{i}(x)}{\mathbb{E}(\Delta_1(x))}$.
To apply the exponential inequality given by Corollary A.8(i) of Ferraty and Vieu (2006) in Appendix A  we have first to show that for all $m\geq 2$ there exist a positive constant $C_{m}$ such that $\mathbb{E}|L_{n,1}^{m}(x)| \leq C_{m}a^{2(m-1)}$. We have
\begin{eqnarray*}
\mathbb{E}\left(|L_{n,1}(x)|^m \right)\leq C \sum_{k=0}^m
\displaystyle{m \choose k} \mathbb{E}
\left[\left(\Delta_1^\star(x)\right)^k\right]
\left[\mathbb{E}(\Delta_1^\star(x)) \right]^{m-k}.
\end{eqnarray*}
Then using Lemma $\ref{lemma1}$ we get $\mathbb{E}\left(|L_{n,1}(x)|^m \right)\leq C_m \max_{k=0,1,\dots,m} (\phi(h))^{1-k} \leq C_{m} (\phi(h))^{1-m}.$ Therefore, we have $a^{2}=(\phi(h))^{-1}.$
Now, for all $\epsilon>0$, we have
\begin{eqnarray*}
\mathbb{P}(|R^x_{n,1}|>\epsilon) &\leq& 2 \exp\left\{- \frac{n\epsilon^2}{2\phi(h)(1+\epsilon)} \right\}.
\end{eqnarray*}
The desired result follows from Borel Cantelli Lemma by choosing
$\epsilon=\epsilon_0\sqrt{\log n/n\phi(h)}$ where $\epsilon_0$ is
a large enough positive  constant.
\end{proof}

The following lemma describes the uniform asymptotic behavior of
the conditional bias term $B_n^x(u)$ as well as that of $R_n^x(u)$
and $Q_n^x(u)$ with respect to $u$.

\begin{lemma}\label{lemma3}
(i) Under conditions  (H1)-(H2) (H3)(i), we have
\begin{equation}\label{rateB}
\sup_{u\in \mathbb{R}^d}|B_n^x(u)| = O_{a.s.}(h^\beta).
\end{equation}

(ii) If in addition that  (H1)-(H2) hold true and condition
(\ref{CondThm1}) is satisfied, we have
\begin{equation}\label{rateR}
\sup_{u\in \mathbb{R}^d}|R_n^x(u)| =
O_{a.s.}\left(h^\beta\sqrt{\frac{\log n}{n\phi(h)}}\right)
\end{equation}
\end{lemma}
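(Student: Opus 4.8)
\textbf{Proof proposal for Lemma \ref{lemma3}.}

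The plan is to handle the two estimates separately, with part (i) --- the bias term $B_n^x(u)$ --- first, since part (ii) uses it together with Lemma \ref{lemma2}. For part (i), recall that $B_n^x(u) = \overline G^x_{n,2}(u) - G^x(u)$, and write out $\overline G^x_{n,2}(u)$ explicitly as $\mathbb{E}[\|Y_1-u\|\Delta_1(x)]/\mathbb{E}[\Delta_1(x)]$. Conditioning on $X_1$ inside the numerator, this becomes $\mathbb{E}[G^{X_1}(u)\Delta_1(x)]/\mathbb{E}[\Delta_1(x)]$. Since $\Delta_1(x) = K(d(x,X_1)/h)$ is supported on $\{d(x,X_1)\le h\}$ (kernel support $[0,1]$ from (H1)), on that event we may apply the H\"older-type smoothness condition (H3)(i): $|G^{X_1}(u) - G^x(u)| \le c_1 d^\beta(x,X_1) \le c_1 h^\beta$, uniformly in $u$. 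Hence
$$
\sup_{u\in\mathbb{R}^d}\left|\overline G^x_{n,2}(u) - G^x(u)\right|
\le \frac{\mathbb{E}\!\left[\,|G^{X_1}(u)-G^x(u)|\,\Delta_1(x)\right]}{\mathbb{E}[\Delta_1(x)]}
\le c_1 h^\beta,
$$
which gives $\sup_u |B_n^x(u)| = O(h^\beta)$, in fact deterministically, so certainly $O_{a.s.}(h^\beta)$. The only subtlety is making sure the bound is genuinely uniform in $u$, but that is immediate because the constant $c_1$ in (H3)(i) does not depend on $u$.

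For part (ii), recall $R_n^x(u) = -B_n^x(u)(G^x_{n,1} - 1)$. This factorizes cleanly: taking the supremum over $u$ and using that $G^x_{n,1}$ does not depend on $u$,
$$
\sup_{u\in\mathbb{R}^d}|R_n^x(u)| \le \left(\sup_{u\in\mathbb{R}^d}|B_n^x(u)|\right)\cdot \left|G^x_{n,1}-1\right|.
$$
Now substitute the bound $\sup_u|B_n^x(u)| = O_{a.s.}(h^\beta)$ from part (i) and the rate $|G^x_{n,1}-1| = O_{a.s.}(\sqrt{\log n/(n\phi(h))})$ from Lemma \ref{lemma2} (whose hypotheses (H1)--(H2) and (\ref{CondThm1})(i) are subsumed in those of part (ii)). Multiplying the two almost-sure rates yields
$$
\sup_{u\in\mathbb{R}^d}|R_n^x(u)| = O_{a.s.}\!\left(h^\beta\sqrt{\frac{\log n}{n\phi(h)}}\right),
$$
as claimed.

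There is no real obstacle here: the lemma is essentially bookkeeping that combines the smoothness assumption (H3)(i) with the already-established control of $G^x_{n,1}$. The one place to be slightly careful is the conditioning argument in part (i) --- one should note that $Y_1 \mapsto \|Y_1-u\|$ is integrable so that Fubini/conditional-expectation manipulations are valid (this is where $\mathbb{E}\|Y\|<\infty$, assumed throughout, enters), and that the restriction to the ball $\mathcal{B}(x,h)$ is legitimate because $K$ vanishes outside $[0,1]$. Everything else is a one-line estimate.
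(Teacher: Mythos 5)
Your proof is correct and follows essentially the same route as the paper's: conditioning on $X_1$, invoking the kernel's support to restrict to $\mathcal{B}(x,h)$ so that (H3)(i) gives the uniform $c_1 h^\beta$ bound for part (i), and then factorizing $R_n^x(u) = -B_n^x(u)(G^x_{n,1}-1)$ and combining with Lemma \ref{lemma2} for part (ii). The extra remarks on integrability and uniformity in $u$ are sound but not needed beyond what the paper already does.
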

\begin{proof} of Lemma \ref{lemma3}. Recall that
$$B_n^x(u) = \overline G^x_{n,2}(u) - G^x(u).$$
Conditioning by $X$ and using   the definition of $G^x (u)$ and
condition (H3)(i), one has
\begin{eqnarray*}
|B_n^x(u)| &=& \left| \frac{1}{\mathbb{E}\Delta_1(x)}\mathbb{E}\left\{\Delta_1(x) \mathbb{E}[\|Y_1-u\|\;|\;X]\right\} - G^x(u) \right|\\
&=& \left| \frac{1}{\mathbb{E}\Delta_1(x)}\mathbb{E}\left\{\Delta_1(x) (G^X(u) - G^x(u)) \right\} \right| \\
&\leq & \sup_{x' \in B(x,h)}|G^{x'}(u) - G^x(u)| = O_{a.s.}(h^\beta).
\end{eqnarray*}
The later quantity is independent of $u$, this leads to
$\sup_{u\in \mathbb{R}^d}|B_n^x(u)|=O_{a.s.}(h^\beta).$

 \noindent
Now, to deal with the quantity $R_n^x(u),$ write it as $R_n^x(u) =
- B_n^x(u)\left( G^x_{n,1} - 1\right).$  Therefore
$$\sup_{u\in \mathbb{R}^d}|R_n^x(u)| = \sup_{u\in \mathbb{R}^d}| B_n^x(u)| | G^x_{n,1} - 1|.$$
The statement (\ref{rateR}) follows from (\ref{rateB}) combined
with Lemma \ref{lemma2}.

\end{proof}


\begin{lemma}\label{lemma4}
 Under assumptions (H1)-(H2), (H4)(i),  conditions
(\ref{CondThm1}) and (\ref{Cond3.Thm1}) we have
$$\sup_{u\in \mathbb{R}^d}|G^x_{n,2}(u) - \overline G^x_{n,2}(u)|= O_{a.s.}\left(\sqrt{\frac{\log n}{n\phi(h)}}\right).$$
\end{lemma}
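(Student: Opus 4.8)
## Proof plan for Lemma \ref{lemma4}

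The plan is to write the centered quantity $G^x_{n,2}(u) - \overline{G}^x_{n,2}(u)$ as a normalized sum of i.i.d. zero-mean random variables and then apply the Bernstein-type exponential inequality (Corollary A.8 of Ferraty and Vieu (2006)), exactly as was done for $G^x_{n,1}$ in Lemma \ref{lemma2}, but now handling the supremum over $u \in \ER^d$. Write
$$G^x_{n,2}(u) - \overline{G}^x_{n,2}(u) = \frac{1}{n}\sum_{i=1}^n Z_{n,i}(x,u), \qquad Z_{n,i}(x,u) = \frac{\|Y_i-u\|\,\Delta_i(x)}{\mathbb{E}\Delta_1(x)} - \mathbb{E}\!\left[\frac{\|Y_i-u\|\,\Delta_i(x)}{\mathbb{E}\Delta_1(x)}\right].$$
First I would establish the moment bound $\mathbb{E}|Z_{n,1}(x,u)|^m \leq C_m (\phi(h))^{1-m}$ uniformly in $u$: conditioning on $X$, one has $\mathbb{E}[\|Y_1-u\|^k \mid X=x'] = G^{x'}_k(u)$ which is bounded uniformly in $u$ and in $x' \in B(x,h)$ by assumption (H4)(i) together with (\ref{Cond3.Thm1}); combining this with the moment estimates for $\Delta_1^k(x)$ from Lemma \ref{lemma1}(i) and expanding $|Z_{n,1}|^m$ by the binomial theorem gives the claimed bound with the same scaling $a^2 = (\phi(h))^{-1}$ as in Lemma \ref{lemma2}. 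Applying the exponential inequality pointwise then yields, for each fixed $u$,
$$\mathbb{P}\!\left(|G^x_{n,2}(u) - \overline{G}^x_{n,2}(u)| > \epsilon_0 \sqrt{\tfrac{\log n}{n\phi(h)}}\right) \leq 2\exp\!\left(-C\epsilon_0^2 \log n\right) = 2 n^{-C\epsilon_0^2}.$$

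The main obstacle is passing from this pointwise bound to the uniform bound over all of $\ER^d$, since $\ER^d$ is unbounded. The standard device here is a compactification/truncation argument. Using condition (\ref{Cond3.Thm1}), namely $\overline{\lim}_{\|u\|\to\infty}\|u\|G^x(u) < \infty$, one shows that for $\|u\|$ large both $G^x_{n,2}(u)$ and $\overline{G}^x_{n,2}(u)$ are controlled: indeed $\|Y_i - u\| \leq \|Y_i\| + \|u\|$, so on the region $\|u\| > R_n$ with $R_n$ a slowly growing sequence (e.g. a power of $n$), the contribution is negligible compared to $\sqrt{\log n / n\phi(h)}$ after dividing by $\|u\|$ — this is precisely why condition (\ref{Cond3.Thm1}) is imposed, mirroring its classical role in uniform density estimation noted after Proposition \ref{resG}. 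On the compact ball $\{\|u\| \leq R_n\}$ I would use a covering argument: cover the ball by $N_n = O(R_n^d / \eta_n^d)$ balls of radius $\eta_n$; the map $u \mapsto \|Y_i - u\|$ is $1$-Lipschitz, hence so is $u \mapsto G^x_{n,2}(u)$ and $u \mapsto \overline{G}^x_{n,2}(u)$ (the Lipschitz constant of $Z_{n,i}$ in $u$ is $O(\Delta_i^\star(x))$ after normalization, which is $O((\phi(h))^{-1})$ in sup-norm but averages out). Choosing $\eta_n$ polynomially small and $R_n$ polynomially large keeps $\log N_n = O(\log n)$, so a union bound over the net centers, combined with a large enough $\epsilon_0$ so that $C\epsilon_0^2$ exceeds the exponent generated by $N_n$, makes the probabilities summable; Borel–Cantelli then gives the result.

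To summarize the order of steps: (1) center and normalize to get an i.i.d. sum; (2) verify the uniform-in-$u$ moment bound via conditioning, (H4)(i) and Lemma \ref{lemma1}; (3) handle the tail region $\|u\|$ large using (\ref{Cond3.Thm1}); (4) discretize the compact region, use the Lipschitz property of $\|\cdot - u\|$ to transfer control from the net to the whole ball; (5) apply the exponential inequality at each net point, union bound, and conclude with Borel–Cantelli. The delicate point requiring care is the bookkeeping in step (4): one must check that the Lipschitz-in-$u$ increment of the normalized sum, which carries a factor $(\phi(h))^{-1}$, is still beaten by the product $\eta_n \cdot (\text{net size})$ at the target rate $\sqrt{\log n/(n\phi(h))}$ — this forces $\eta_n$ to be chosen of order roughly $\phi(h)\sqrt{\log n/(n\phi(h))}$ or smaller, which is harmless since it only inflates $\log N_n$ by a constant multiple of $\log n$.
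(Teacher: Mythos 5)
Your plan follows essentially the same route as the paper's proof: the same moment bound $\mathbb{E}|Z_{n,1}(x)|^m = O\left((\phi(h))^{1-m}\right)$ obtained by conditioning on $X$ and using (H4)(i) with Lemma \ref{lemma1}, the same exponential inequality (Corollary A.8 of Ferraty and Vieu) applied at the points of a polynomially growing net with a union bound and Borel--Cantelli, the same use of the $1$-Lipschitz property of $u \mapsto \|Y_i-u\|$ to control the discretization error (which, as you correctly note, averages to $O(G^x_{n,1}) = O_{a.s.}(1)$ rather than $O((\phi(h))^{-1})$), and the same appeal to condition (\ref{Cond3.Thm1}) to dispose of the region $\|u\| > n^{\gamma}$. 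The only point where you are looser than the paper is that tail region, which the paper handles by splitting $|G^x_{n,2}(u)-\overline G^x_{n,2}(u)|$ via the triangle inequality into $|G^x_{n,2}(u)|+|G^x(u)|+|G^x(u)-\overline G^x_{n,2}(u)|$ and bounding each piece separately, but the underlying idea is the same.
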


\begin{proof} of Lemma \ref{lemma4}. For $u\in \ER^d$ and $r>0$, let
$$S(u, r)=\{ u^\prime: u^\prime\in\mathbb{R}^d, ||u^\prime-u||\leq r\},$$ be the sphere of radius $r$ centered at $u$.
Let $[-n^\gamma,   n^\gamma]^d$, for $1/2<\gamma<2$, be an
interval of $\mathbb{R}^d$.  Divide  $[-n^\gamma, n^\gamma]$ into
$k_n$ subintervals each of length $b_n=[2n^\gamma/k_n]$ (where
$[t]$ is the integer part of $t$). Since the set $S(0,
n^\gamma)=\{ u^\prime: ||u^\prime||\leq
 n^\gamma \}$ is compact, it can be covered by $k_n^d$ bounded hypercubes of the form
 $$S_{n,j}:=S(u_j, b_n)=\{ u^\prime: ||u^\prime-u_j||\leq b_n\}, \quad j=1, \ldots, k_n^d.$$
%

We have
\begin{eqnarray}\label{Sup1}
& &\sup_{||u||\leq n^\gamma}| G^x_{n,2}(u)-
\overline G^x_{n,2}(u) |\nonumber \\
&\leq & \max_{1\leq j\leq k_n^d} \sup_{u\in S_{n,j}}| G^x_{n,2}(u)
- G^x_{n,2}(u_j) | +
\max_{1\leq j\leq k_n^d} | G^x_{n,2}(u_j) -
\overline{G}^x_{n,2}(u_j) | \nonumber \\
&+&
 \max_{1\leq j\leq k_n^d} \sup_{u\in S_{n,j}}|
\overline{G}^x_{n,2}(u) - \overline{G}^x_{n,2}(u_j) |
:= I_{n,1}+I_{n,2}+I_{n,3}.
\end{eqnarray}
Observe now that
\begin{eqnarray*}\sup_{u\in S_{n,j}}| G^x_{n,2}(u) - G^x_{n,2}(u_j)
| &\leq& \frac{1}{n\mathbb{E}(\Delta_1(x))} \sum_{i=1}^n
\sup_{u\in S_{n,j}}\Big| ||Y_i-u||-||Y_i-u_j||\Big|\, \Delta_i(x) \\
 &\leq& \frac{1}{n\mathbb{E}(\Delta_1(x))} \sum_{i=1}^n
 \Delta_i(x) \sup_{u\in S_{n,j}}||u -u_j|| =b_n G_{n,1}^x,
 \end{eqnarray*}
and
\begin{eqnarray*}\sup_{u\in S_{n,j}}| \overline{G}^x_{n,2}(u) - \overline{G}^x_{n,2}(u_j)
| &\leq& \mathbb{E}\left[ \sup_{u\in S_{n,j}}| G^x_{n,2}(u) -
G^x_{n,2}(u_j)|\right]=b_n.
 \end{eqnarray*}
 If we denote by $\alpha_n=\sqrt{n\phi(h)/\log n}$ the convergence rate,  one gets by Lemma \ref{lemma2}
$$\alpha_n(I_{n,1}+I_{n,3})=\mathcal{O}(\alpha_n b_n
(1+G_{n,1}^x))=\mathcal{O}(\alpha_n b_n)=\mathcal{O}(\alpha_n n^\gamma/k_n^d).$$
The choice of $k_n^d=[\alpha_n n^\gamma \log n]$ implies that
\begin{eqnarray}\label{condition-Vitesse1}
\alpha_n(I_{n,1}+I_{n,3})=o(1).
\end{eqnarray}
In order to evaluate the term $I_{n,2}$,  let us denote by
$$\Delta_{i}^\star(x) = \frac{\Delta_i(x)}{\mathbb{E}\Delta_1(x)},$$
and
$$Z_{n,i}(x) = \|Y_i-u_j\|\Delta_{i}^\star(x) - \mathbb{E}\left[ \|Y_1-u_j\|\Delta_{1}^\star(x)\right].$$
Then, we have
$$G^x_{n,2}(u_j) - \overline G^x_{n,2}(u_j) = \frac{1}{n}\sum_{i=1}^n Z_{n,i}(x).$$

\noindent For all $m\in \mathbb{N}-\{0\},$ observe that
$$Z_{n,i}^m(x) = \sum_{k=0}^m \displaystyle{m \choose k}\left(\|Y_i-u_j\|\Delta_i^\star(x)\right)^k (-1)^{m-k}
\left[\mathbb{E}(\|Y_1-u_j\|\Delta_1^\star(x)) \right]^{m-k}.$$
In order to apply an exponential type inequality, we  have to give
an upper bound for  $\mathbb{E}\left(|Z_{n,1}(x)|^m \right)$. It
follows from the above inequality that
\noindent
\begin{eqnarray*}
\mathbb{E}\left(|Z_{n,1}(x)|^m \right)\leq C \sum_{k=0}^m
\displaystyle{m \choose k} \mathbb{E}
\left[\left(\|Y_1-u_j\|\Delta_1^\star(x)\right)^k\right]
\left[\mathbb{E}(\|Y_1-u_j\|\Delta_1^\star(x)) \right]^{m-k}.
\end{eqnarray*}
 On the other hand, we have  for any $k\geq2$
\begin{eqnarray*}
\mathbb{E}\left[\left(\|Y_1-u_j\|\Delta_1^\star(x)\right)^k\right]
&=& \mathbb{E}\left[(\Delta_1^\star(x))^k \mathbb{E}\left(\|Y_1-u_j\|^k\;|\;X_1 \right) \right]\\
&=&\mathbb{E}\left[(\Delta_1^\star(x))^k G_k^{X_1}(u_j) \right].
\end{eqnarray*}
Using the first part of condition $(H4)(i)$, which implies that
 $G_k^{x}(u_j)$ is bounded uniformly for all $j$,
one may write

\begin{eqnarray*}
\mathbb{E}\left[\left(\|Y_1-u_j\|\Delta_1^\star(x)\right)^k\right]
&\leq& \mathbb{E}\left[(\Delta_1^\star(x))^k |G^{X_1}_k(u_j) -
G^x_k(u_j)|
 \right] + G^x_k(u_j) \mathbb{E}((\Delta_1^\star(x))^k)\\
&\leq& \mathbb{E}((\Delta_1^\star(x))^k) \left[\max_{j}\sup_{x'\in B(x,h)} |G^{x'}_k(u_j) - G^x_k(u_j)| + \max_{j} G^x_k(u_j)\right]\\
&\leq& C_0\mathbb{E}\left[(\Delta_1^\star(x))^k \right],
\end{eqnarray*}
where $C_0$ is a positive constant. Moreover, we have
$\mathbb{E}\left(\|Y_1-u_j\| \Delta_1^\star(x)
\right)=\mathcal{O}(1)$ uniformly in $j$  since
$\mathbb{E}\left[\Delta_1^\star(x)\right]=1$ and $\sup_u
\mathbb{E}(|| Y_1-u|| \;| X)<\infty$ in view of condition (\ref{G}).

 Therefore
$[\mathbb{E}\left(\|Y_1-u_j\| \Delta_1^\star(x)
\right)]^{m-k}=\mathcal{O}(1)$.

\noindent  Next, applying  Lemma $\ref{lemma1}$, one may write
\begin{eqnarray*}
\mathbb{E}\left[(\Delta_1^\star(x))^k\right] 
&=&(\phi(h))^{1-k} \left[\frac{M_k}{M_1^k}g^{1-k}(x) + o(1) \right].
\end{eqnarray*}
\noindent Thus
\begin{eqnarray*}
\mathbb{E}\left(|Z_{n,1}(x)|^m \right)&\leq& C_m \max_{k=0,1,\dots,m} (\phi(h))^{1-k}
\end{eqnarray*}

\noindent where $C_m$ is a real positive constant depending on $m$.
Because $\phi(h)$ tends to zero as $n$ goes to infinity, it
comes that

$$\mathbb{E}\left(|Z_{n,1}(x)|^m \right) = \mathcal{O}\left( (\phi(h))^{1-m}\right).$$

\noindent Now, applying Corollary $A.8-i$ in Ferraty \& Vieu
(2006) $k_n^d$ times with $a^2 = (\phi(h))^{-1}$ we obtain, by
choosing
$$\epsilon=\epsilon_n=3\epsilon_0\sqrt{v_n}  \quad \mbox{where} \quad  v_n = (a^2\log
n)/n = \log n/(n\phi(h)) \longrightarrow 0 \quad \mbox{as} \ n\to
\infty,$$ that
\begin{eqnarray*}\label{Sup4}
%
\mathbb{P}\left( | I_{n,2} | \geq \epsilon \right) &\leq& 2k_n^d
\exp\left( -\epsilon_0^2 \log n \left[ \frac{1}{2(1+\epsilon_0
\sqrt{v_n})} \right] \right)\leq 2k_n^d n^{-\epsilon_0^2}.
\end{eqnarray*}
One may choose $\epsilon_0$ large enough such that
$$\sum_{n} \mathbb{P}\left( | I_{n,2} | \geq \epsilon
\right)<\infty.$$
We conclude by Borel-Cantelli lemma and
(\ref{condition-Vitesse1}) that
$$\alpha_n\sup_{||u||\leq
n^\gamma}| G^x_{n,2}(u)- \overline
G^x_{n,2}(u)|=\mathcal{O}_{a.s}(\alpha_n\sqrt{v_n})=\mathcal{O}_{a.s}(1).$$


Next, we have
\begin{eqnarray*}\label{vitesse2}
\sup_{u\in \mathbb{R}^d}\alpha_n|G^x_{n,2}(u)-\overline
G^x_{n,2}(u)|
&\leq& \sup_{||u|| \leq n^\gamma}\alpha_n|G^x_{n,2}(u)-\overline
G^x_{n,2}(u)|+ \sup_{||u|| >
n^\gamma}\alpha_n|G^x_{n,2}(u)-\overline G^x_{n,2}(u)|\nonumber \\
&=&
\sup_{||u|| > n^\gamma}\alpha_n|G^x_{n,2}(u)-\overline
G^x_{n,2}(u)| + \mathcal{O}_{a.s.}(1),
\end{eqnarray*}
in view of the above result. Now, we have
\begin{eqnarray}\label{ConUniform1}
& &\alpha_n\sup_{u: ||u||\geq n^\gamma}| G^x_{n,2}(u) -
\overline G^x_{n,2}(u)|\nonumber \\
&\leq &
\alpha_n\sup_{u: ||u||\geq n^\gamma}|   G^x_{n,2}(u) |
+\alpha_n\sup_{u: ||u||\geq n^\gamma}| G^x(u) |
+ \alpha_n\sup_{u}|  G^x(u)- \overline{G}^x_{n,2}(u) |.
\end{eqnarray}

The last term in (\ref{ConUniform1}) is zero for large $n$,
since conditioning by $X$, one may write
\begin{eqnarray*}
\alpha_n|\overline{G}_{n,2}^x(u)-G^x(u)| &=& \alpha_n
|B_n^x(u)|=\mathcal{O}_{a.s.}(h_n^{\beta}\alpha_n)=_{a.s.}(1)
\end{eqnarray*}
in view  Lemma \ref{lemma3} (i)  whenever condition
(\ref{CondThm1})(ii) is satisfied.
%
For the second term in (\ref{ConUniform1}), we have

$$\alpha_n \sup_{||u||>n^\gamma}G^x(u)\leq \frac{\alpha_n}{n^\gamma}
\sup_{||u||>n^\gamma}||u||G^x(u)= o(1),$$ whenever  $\gamma >1/2$
and the condition (\ref{Cond3.Thm1}) is satisfied.

Moreover, we have for any $\epsilon>0$
\begin{eqnarray*}
 & & \mathbb{P}\left\{\alpha_n \sup_{u:||u||\geq n^\gamma}| G^x_{n,2}(u) |\geq
\epsilon \right\} \\
 &\leq & \mathbb{P}\left\{\alpha_n \sup_{u:||u||\geq
n^\gamma}\frac{1}{n\mathbb{E}(\Delta_1)} \sum_{i:
||Y_i-u||>n^\gamma/2}
||Y_i-u|| \Delta_i(x)||\geq \epsilon/2 \right\} \nonumber\\
&+& \mathbb{P}\left\{\alpha_n \sup_{u:||u||\geq
n^\gamma}\frac{1}{n\mathbb{E}(\Delta_1)} \sum_{i: ||Y_i-u|| \leq
n^\gamma/2} ||Y_i-u|| \Delta_i(x)||\geq \epsilon/2
\right\}:=J_{n,1}+ J_{n,2}.
\end{eqnarray*}

To  treat  $J_{n,1}$, denote by
$$A_n(\omega):=\{ \omega: \alpha_n
\sup_{||u||>n^\gamma} \frac{1}{n}\sum_{i=1:
||Y_i-u||>n^{\gamma}/2}^n ||Y_i-u||\Delta_i \geq \epsilon/2 \}.$$

\noindent  The event $A_n(\omega)$ is nonempty if and only if
there exists at least $i_0 $ ($1\leq i_0\leq n$)  such that
$||Y_{i_0}-u||>n^\gamma/2$. Thus $"A_n(\omega) \neq \varnothing"
\subset \cup_{i=1}^n \{\omega: ||Y_i-u||\geq n^\gamma/2\}$. It
follows from Markov's inequality, if
$\mathbb{E}(||Y_1-u||)<\infty$, that

$$\mathbb{P}\left( A_n(\omega)\neq \varnothing \right)
=\mathcal{O}(n^{-(\gamma-1)})\quad \mbox{and} \quad  \sum_n
\mathbb{P}\left( A_n(\omega)\neq \varnothing \right)<\infty,$$
whenever $\gamma>1$, which implies that $J_{n,1}=o_{a.s.}(1)$ by
Borel-Cantelli Lemma.

To deal with  $J_{n,2}$, let us denote by

 $$B_n(\omega):=\{\omega: \alpha_n \sup_{u:||u||\geq
n^\gamma}\frac{1}{n\mathbb{E}(\Delta_1)} \sum_{i: ||Y_i-u|| \leq
n^\gamma/2} ||Y_i-u|| \Delta_i(x)||\geq \epsilon/2 \}.$$

$B_n(\omega)$  is  nonempty if and only if there exists at least
$i_0 $ ($1\leq i_0\leq n$)  such that $||Y_{i_0}-u|| \leq
n^\gamma/2$. The later inequality   implies that $||Y_{i_0}-u||
-||u||\leq 0$ whenever $||u|| \geq n^\gamma$. Moreover, we have
(by triangle inequality), whenever the above conditions are hold,
that
$$||Y_{i_0}||\geq \Big| ||Y_{i_0}-u||- ||u|| \Big| =-||Y_{i_0}-u||+||u||> n^\gamma/2.$$
Therefore,
$$"B_n(\omega)\neq \varnothing " \subset \{ \exists  i_0: 1\leq i_0\leq
1, \ ||Y_{i_0}|| \geq n^{\gamma}/2 \}.$$

We conclude  as above that $J_{n,2}=o_{a.s.}(1)$ whenever
$E(||Y_1||)>\infty$ and $\gamma>1$.

This ends the proof of Lemma \ref{lemma4}.
\end{proof}

\begin{lemma}\label{lemmaQ_n}
 Under assumptions (H1)-(H2), (H4)(i) and condition
(\ref{CondThm1})(i),  we have
\begin{equation}\label{rateQ}
Q_n^x(u) = \mathcal{O}_{a.s.}\left(\left(\frac{\log n}{n\phi(h)}
\right)^{1/2}\right).
\end{equation}
\end{lemma}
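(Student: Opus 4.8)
The plan is to exploit the additive structure of $Q_n^x(u)$ and bound its two pieces separately using the lemmas already established. By definition,
$$Q_n^x(u) = \bigl(G^x_{n,2}(u) - \overline G_{n,2}^x(u)\bigr) - G^x(u)\bigl(G^x_{n,1} - 1\bigr),$$
so the triangle inequality gives
$$|Q_n^x(u)| \le \bigl|G^x_{n,2}(u) - \overline G_{n,2}^x(u)\bigr| + |G^x(u)|\cdot\bigl|G^x_{n,1} - 1\bigr|.$$

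First I would handle the second summand. Since $G^x(\cdot)$ is a standing assumption to be uniformly bounded in $u$ (see the line following~(\ref{G})), there is a finite constant $C$ with $|G^x(u)| \le C$ for all $u$, whence $|G^x(u)|\,|G^x_{n,1}-1| \le C\,|G^x_{n,1}-1| = \mathcal{O}_{a.s.}\bigl(\sqrt{\log n/(n\phi(h))}\bigr)$ by Lemma~\ref{lemma2} (valid under (H1)--(H2) and condition~(\ref{CondThm1})(i)), uniformly in $u$. For the first summand I would simply invoke Lemma~\ref{lemma4}, which asserts $\sup_{u\in\mathbb{R}^d}|G^x_{n,2}(u)-\overline G^x_{n,2}(u)| = \mathcal{O}_{a.s.}\bigl(\sqrt{\log n/(n\phi(h))}\bigr)$. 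Adding the two bounds yields the asserted rate for $Q_n^x(u)$, in fact uniformly in $u$, which is precisely the form needed in the decomposition~(\ref{decomp2}).

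Accordingly there is essentially no obstacle left in this lemma: the whole content is carried by Lemmas~\ref{lemma2} and~\ref{lemma4}, the genuinely delicate argument being the uniform control in Lemma~\ref{lemma4} (covering $\{\|u\|\le n^\gamma\}$ by $k_n^d$ small balls, bounding the oscillation of $G^x_{n,2}$ on each ball via the Lipschitz property of $u\mapsto\|Y_i-u\|$, applying the Ferraty--Vieu exponential inequality at the ball centers, and dispatching the far region $\|u\|>n^\gamma$ through Markov's inequality and Borel--Cantelli). Here one need only assemble the pieces. The one point to watch is that the uniform-in-$u$ boundedness of $G^x$ is genuinely used; since it is an explicit hypothesis of the paper, the stated $\mathcal{O}_{a.s.}$ bound holds for every $u$ with the same non-random rate.
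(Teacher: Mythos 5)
Your proof is correct and follows exactly the paper's own route: the paper likewise writes $Q_n^x(u)$ as the sum of the two terms and cites Lemma~\ref{lemma4} for $G^x_{n,2}(u)-\overline G^x_{n,2}(u)$ and Lemma~\ref{lemma2} for $G^x_{n,1}-1$, with the uniform boundedness of $G^x$ absorbing the factor $G^x(u)$. You have merely made explicit the triangle-inequality step and the role of the boundedness assumption that the paper leaves implicit.
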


\begin{proof} of  Lemma \ref{lemmaQ_n}.
In order to check the  statement  (\ref{rateQ}), recall that
$$Q_n^x(u) = \left(G^x_{n,2}(u) - \overline G^x_{n,2}(u)\right) - G^x(u) \left(G^x_{n,1} - 1 \right).$$
The  result follows then  from Lemmas \ref{lemma2} and
\ref{lemma4}.
\end{proof}

\begin{proof} of Proposition \ref{resG}. The proof follows from
Lemmas \ref{lemma2}, \ref{lemma3}, \ref{lemma4} and
\ref{lemmaQ_n}.

\end{proof}

\begin{proof} of Theorem \ref{thm2}.

We have  from the definitions of $\mu(x)$ and $\mu_n(x)$  and the
existence and the uniqueness of these quantities  that:
$$G^x(\mu(x))=\inf_{u\in \mathbb{R}^d}G^x(u) \quad \mbox{and}
\quad  G_n^x(\mu_n(x))=\inf_{u\in \mathbb{R}^d}G_n^x(u).$$ It
follows then
\begin{eqnarray}
|G^x(\mu(x)) - G^x(\mu_n(x))|
&\leq& |G^x(\mu(x)) - G_n^x(\mu_n)| + |G_n^x(\mu_n(x)) - G^x(\mu_n(x))|\nonumber\\
&=& |-(-\inf_{u\in \mathbb{R}^d}G^x(u) + \inf_{u\in \mathbb{R}^d}G_n^x(u))| + |G_n^x(\mu_n(x)) - G^x(\mu_n(x))|\nonumber\\
&=& |-\sup_{u\in \mathbb{R}^d}G^x(u) + \sup_{u\in \mathbb{R}^d}G_n^x(u)| + |G_n^x(\mu_n(x)) - G^x(\mu_n(x))|\nonumber\\
&\leq&\sup_{u\in \mathbb{R}^d} |G^x(u) - G_n^x(u)| + |G_n^x(\mu_n(x)) - G^x(\mu_n(x))|\nonumber\\
 &\leq& 2 \sup_{u \in\ER^d}|G^x_n(u) - G^x(u)|. \label{sup}
\end{eqnarray}
%
Moreover, since for any fixed $x\in {\cal F}$, the function
$G^x(\cdot)$ is uniformly continuous and because $\mu(x)$ is the
unique minimizer of the function $G^x(\cdot)$, we have then,  for
any $\epsilon >0,$
\begin{eqnarray}\label{uniqueness}
\inf_{u: \|\mu(x) - u\| \geq \epsilon} G^x(u) > G^x(\mu(x)),
\end{eqnarray}
which means that there exists for every $\epsilon >0$, a number
$\eta(\epsilon) >0$ such that $G^x(u) > G^x(\mu(x)) +
\eta(\epsilon)$ for every $u$ such that   $\|\mu(x) - u\|
\geq\epsilon.$
 This implies that the event $\{ \|\mu(x) - \mu_n(x)\| > \epsilon \}$ is included in the event $\{G^x(\mu_n(x)) > G^x(\mu(x)) +\eta(\epsilon) \}.$

\noindent Using inequality (\ref{sup}) we get

\begin{eqnarray*}
\sum_{n\geq 1}\mathbb{P}\left(\|\mu_n(x) - \mu(x)\|  > \epsilon \right)
&\leq& \sum_{n\geq 1}\mathbb{P}\left(G^x(\mu_n(x)) > G^x(\mu(x)) + \eta(\epsilon)  \right)\\
&\leq& \sum_{n\geq 1}\mathbb{P} \left(\sup_{u \in \ER^d} |G_n^x(u)
- G^x(u)| > \eta(\epsilon)/2 \right)<\infty,
\end{eqnarray*}

\noindent similarly to  the proof of the Proposition \ref{resG}.
The statement (\ref{Con.median}) follows then from an application
of Borel-Cantelli Lemma.
\end{proof}


\begin{proof} of Proposition \ref{variance}

\noindent To prove  Proposition \ref{variance}, it suffices to see that

\begin{eqnarray}\label{inegalite}
\|\widetilde H_{n}^x(\xi_n(j)) - H^x(\mu)\| \leq \|\widetilde H_{n}^x(\xi_n(j)) -
\widetilde H_{n}^x(\mu)\|+\|\widetilde H_{n}^x(\mu) - H^x(\mu)\|.
\end{eqnarray}

\noindent Concerning the first term, observe that
\begin{eqnarray}\label{decompH}
\|\widetilde H_{n}^x(\xi_n(i)) - \widetilde H_{n}^x(\mu)\| &\leq& \frac{1}{n\; \mathbb{E}(\Delta_1(x))}
\sum_{i=1}^n \|\mathcal{M} (Y_i,\xi_n(j)) - \mathcal{M}(Y_i,\mu)\| \;\Delta_i(x) \nonumber\\
&:=& \mathcal{A}_{n}+ \mathcal{B}_{n},
\end{eqnarray}
where
$$
\mathcal{A}_{n}:= \frac{\sqrt{d}}{n\mathbb{E}(\Delta_{1}(x))} \sum_{i=1}^n \frac{\Big| \|Y_{i}- \mu\| - \| Y_{i}-\xi_{n}(j)\| \Big| \Delta_{i}(x)}{\|Y_{i} - \mu \| \; \| Y_{i} - \xi_{n}(j)\|}
$$
and

$$
\mathcal{B}_{n}:= \frac{1}{n\mathbb{E}(\Delta_{1}(x))}\sum_{i=1}^n
\Delta_{i}(x) \frac{\Big|\Big| \| Y_{i} - \xi_{n}(j)\|
\;\mathcal{U}(Y_{i} - \mu)\; \mathcal{U}^T(Y_{i} - \mu) - \| Y_{i}
- \mu\| \;\mathcal{U}(Y_{i} - \xi_{n}(j))\; \mathcal{U}^T(Y_{i} -
\xi_{n}(j)) \Big|\Big|}{\|Y_{i} - \mu \| \; \| Y_{i} -
\xi_{n}(j)\|}.
$$
Using Theorem \ref{thm2} and the triangular inequality we can
easily see that $\mathcal{A}_{n} = o_{a.s.}(1)\times
\frac{1}{n\mathbb{E}(\Delta_{1}(x))}\sum_{i=1}^n
\frac{\Delta_{i}(x)}{\|Y_{i}-\mu \|^2}$.

\noindent Combining Markov and Cauchy-Schwarz inequalities and
making use of the assumption H3-(iii), we can easily prove that
$\frac{1}{n\mathbb{E}(\Delta_{1}(x))}\sum_{i=1}^n
\frac{\Delta_{i}(x)}{\|Y_{i}-\mu \|^2} =
\mathcal{O}_{\mathbb{P}}(1)$. Then we conclude that
$\mathcal{A}_{n} = o_{\mathbb{P}}(1).$

For the second term  $\mathcal{B}_{n}$ of the inequality
(\ref{decompH}), we have by triangular inequality and the fact
that $\|U(Y_{i} - \theta)\|=1$, that
\begin{eqnarray*}
\Big|\Big| \| Y_{i} - \xi_{n}(j)\| \;\mathcal{U}(Y_{i} - \mu)\; \mathcal{U}^T(Y_{i} - \mu) - \| Y_{i} - \mu\| \;\mathcal{U}(Y_{i} - \xi_{n}(j))\; \mathcal{U}^T(Y_{i} - \xi_{n}(j)) \Big|\Big| &\leq &\\
\Big| \| Y_{i}-\xi_{n}(j)\| - \| Y_{i}-\mu\|\Big| +  \|Y_{i}-\mu \|\; \Big|\Big| \mathcal{U}(Y_{i}-\mu)\;\mathcal{U}^{T}(Y_{i}-\mu) - \mathcal{U}(Y_{i}-\xi_{n}(j))\;\mathcal{U}^{T}(Y_{i}-\xi_{n}(j))\Big|\Big| &\leq &\\
\| \mu - \xi_{n}(j) \| + \|Y_{i} - \mu \| \times \Big|\Big|\, \mathcal{U}(Y_{i}-\mu)\; \mathcal{U}^T(Y_{i}-\mu) - \mathcal{U}(Y_{i}-\xi_{n}(j))\; \mathcal{U}^T(Y_{i}-\xi_{n}(j)) \Big|\Big|.
\end{eqnarray*}
\noindent Since
 \begin{eqnarray*}
 \mathcal{U}(Y_{i}-\mu)\, \mathcal{U}^T(Y_{i}-\mu) - \mathcal{U}(Y_{i}-\xi_{n}(j))\, \mathcal{U}^T(Y_{i}-\xi_{n}(j)) &=& \left[\,
 \mathcal{U}(Y_{i}-\mu) - \mathcal{U}(Y_{i}-\xi_{n}(j))\,\right]\, \mathcal{U}^{T}(Y_{i} - \mu)\\
 & & + \;\mathcal{U}(Y_{i}-\xi_{n}(j)) \left[\,\mathcal{U}^T(Y_{i}-\mu) - \mathcal{U}^T(Y_{i}-\xi_{n}(j)) \,\right],
 \end{eqnarray*}
\noindent and $\displaystyle\|\mathcal{U}(Y_{i}-\mu) - \mathcal{U}(Y_{i}-\xi_{n}(j)) \| \leq 2\frac{\| \mu - \xi_{n}(j) \|}{\| Y_{i} -
 \xi_{n}(j)\|}$, we can conclude, by using Theorem \ref{thm2}, that
\begin{eqnarray*}
\mathcal{B}_{n} = o_{a.s.}(1) \times
\frac{1}{n\mathbb{E}(\Delta_{1}(x))}\sum_{i=1}^n
\frac{\Delta_{i}(x)}{\|Y_{i}-\mu \|^2}
\end{eqnarray*}
Finally, using the same arguments as above (concerning the  proof
of the term $\mathcal{A}_{n}$), we get $\mathcal{B}_{n} =
o_{\mathbb{P}}(1)$ and this is allows us to conclude that
$\|\widetilde H_{n}^x(\xi_n(i)) - \widetilde H_{n}^x(\mu)\| =
o_{\mathbb{P}}(1)$.
\noindent Now we are interesting to the second term of the right
side term of (\ref{inegalite}). Write
\begin{eqnarray*}
\widetilde H_{n}^x(\mu) - H^x(\mu) = \underbrace{\widetilde
H_{n}^x(\mu) - \mathbb{E}[\widetilde H_{n}^x(\mu)]}_{K_{n,1}} +
\underbrace{\mathbb{E}[\widetilde H_{n}^x(\mu)] -
H^x(\mu)}_{K_{n,2}}.
\end{eqnarray*}
We have to show that each term $K_{n,i}$  $(i=1, 2)$ is
asymptotically negligible.  We have
\begin{eqnarray*}
\| K_{n,1} \|^2 =tr(K_{n,1}^T K_{n,1}) = \sum_{k=1}^d \sum_{j=1}^d \mathcal{Z}_{k,j}^2
\end{eqnarray*}
where $(\mathcal{Z}_{k,j})_{1\leq k,j \leq d}$ is the general term
of the matrix $K_{n,1}^T K_{n,1}$ which may be  can be written as
\begin{eqnarray*}
\mathcal{Z}_{k,j} &=& \frac{1}{n\mathbb{E}(\Delta_{1}(x))}
\sum_{i=1}^n \left[ \mathcal{M}_{k,j}(Y_{i}, \mu)\Delta_{i}(x) -
\mathbb{E}\left(\mathcal{M}_{k,j}(Y_{i}, \mu)\Delta_{i}(x)\right)
\right].
 \end{eqnarray*}
Using the assumption (H3)-(iv), Lemma \ref{lemma1} and corollary
A.8 of \cite{ferraty2006}, we can easily prove that for all
$1\leq k,j \leq d $, $Z_{k,j} = o_{\mathbb{P}}(1)$.



\noindent To handle $K_{n,2}$,  observe that
\begin{eqnarray*}
\|K_{n,2}\| &=& \left\|\mathbb{E}\left[\frac{\sum_{i=1}^n \mathcal{M}(Y_i,\mu)\;\Delta_i(x)}{n\;\mathbb{E}(\Delta_1(x))}\right] - H^x(\mu)\right\|\\
%
&\leq&\frac{1}{\mathbb{E}(\Delta_1(x))}
 \mathbb{E}\left(\|H^{X_1}(\mu)-H^x(\mu)\| \Delta_1(x)\right) \\
 &\leq& \sup_{x' \in B(x,h)}
\|H^{x'}(\mu)-H^x(\mu)\|=o_{a.s.}(1)
\end{eqnarray*}
in view of  condition $(H3)(ii)$.
\end{proof}
\begin{lemma}\label{l1}
Under hypothesis (H1)-(H2) and (H4)(ii), and if for any $\delta
>0$, $(n\phi(h))^{-\delta/2} \rightarrow 0$,  we have

$$\sqrt{ n \phi(h)} \left( \nabla_u \widetilde G_n^x(\mu) - \mathbb{E}\left[\nabla_u \widetilde G_n^x(\mu)\right]\right)
\stackrel{\mathcal{D}}{\longrightarrow} \mathcal{N}_d(0, \
\tilde{\Sigma}^x(\mu)).$$

where $\tilde{\Sigma}^x(\mu)$ is the limiting covariance matrix of  $\nabla_u \widetilde G_n^x(\mu) -
\mathbb{E}\left[\nabla_u \widetilde G_n^x(\mu)\right].$
%
\end{lemma}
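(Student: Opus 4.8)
The plan is to reduce the $d$-dimensional statement to a one-dimensional central limit theorem by the Cram\'er--Wold device, and then to apply Lyapunov's CLT to a triangular array of row-wise i.i.d.\ centered summands. Fix $\ell\in\ER^d$, write $\Delta_i^{\star}(x)=\Delta_i(x)/\mathbb{E}(\Delta_1(x))$ and put $Z_{n,i}:=\ell^t\mathcal{U}(Y_i-\mu)\,\Delta_i^{\star}(x)$, $i=1,\dots,n$, so that the $Z_{n,i}$ are i.i.d.\ and
$$\sqrt{n\phi(h)}\;\ell^t\big(\nabla_u\widetilde G_n^x(\mu)-\mathbb{E}[\nabla_u\widetilde G_n^x(\mu)]\big)=-\sqrt{\tfrac{\phi(h)}{n}}\sum_{i=1}^n\big(Z_{n,i}-\mathbb{E} Z_{n,i}\big).$$
It then suffices to show that this last quantity converges in law to $\mathcal{N}(0,\ell^t\widetilde\Sigma^x(\mu)\ell)$ for every $\ell$, with $\widetilde\Sigma^x(\mu)=\tfrac{M_2}{M_1^2 g(x)}\Sigma^x(\mu)$; the Cram\'er--Wold theorem then gives the joint convergence.

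First I would identify the asymptotic variance $\sigma_n^2:=\phi(h)\,\mathrm{Var}(Z_{n,1})$. Conditioning on $X_1$ and noting that $\Delta_1^{\star}(x)\neq0$ forces $d(x,X_1)\le h$, the $(i,j)=(2,0)$ continuity part of (H4)(ii) lets me replace $W_2^{X_1}(\mu)=\mathbb{E}[(\ell^t\mathcal{U}(Y-\mu))^2\mid X=X_1]$ by $W_2^x(\mu)+o(1)=\ell^t\Sigma^x(\mu)\ell+o(1)$ on that event, so that $\mathbb{E}(Z_{n,1}^2)=(\ell^t\Sigma^x(\mu)\ell+o(1))\,\mathbb{E}[(\Delta_1^{\star}(x))^2]$; Lemma \ref{lemma1} then yields $\mathbb{E}[(\Delta_1^{\star}(x))^2]=\tfrac{M_2}{M_1^2g(x)\phi(h)}(1+o(1))$. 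The mean term is negligible after scaling since $\mathbb{E} Z_{n,1}=\mathbb{E}[\Delta_1^{\star}(x)\,\ell^t\mathbb{E}(\mathcal{U}(Y_1-\mu)\mid X_1)]=O(1)$ (because $\|\mathcal{U}\|\le1$ and $\mathbb{E}\Delta_1^{\star}(x)=1$), whence $\phi(h)(\mathbb{E} Z_{n,1})^2=o(1)$. Thus $\sigma_n^2\to\tfrac{M_2}{M_1^2g(x)}\ell^t\Sigma^x(\mu)\ell$, which is strictly positive because $\Sigma^x(\mu)$ is positive definite and $M_1,M_2>0$, as noted after Theorem \ref{CLT}.

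Then I would check Lyapunov's condition for the $\delta$ of (H4)(ii). By the $c_r$-inequality and $\mathbb{E} Z_{n,1}=O(1)$, $\mathbb{E}|Z_{n,1}-\mathbb{E} Z_{n,1}|^{2+\delta}\le C\,\mathbb{E}|Z_{n,1}|^{2+\delta}$; conditioning on $X_1$ gives $\mathbb{E}|Z_{n,1}|^{2+\delta}=\mathbb{E}[(\Delta_1^{\star}(x))^{2+\delta}\,W_{2+\delta}^{X_1}(\mu)]$, where $W_{2+\delta}^{x'}(\mu)$ is exactly the $(i,j)=(2,1)$ quantity of (H4)(ii), hence finite at $x$ and bounded on a neighbourhood of $x$; using the restriction $d(x,X_1)\le h$ and Lemma \ref{lemma1} with the real exponent $2+\delta$, one obtains $\mathbb{E}|Z_{n,1}|^{2+\delta}=O((\phi(h))^{-(1+\delta)})$. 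Consequently the Lyapunov ratio
$$L_n=\frac{n\,(\phi(h)/n)^{1+\delta/2}\,\mathbb{E}|Z_{n,1}-\mathbb{E} Z_{n,1}|^{2+\delta}}{\sigma_n^{2+\delta}}=O\!\left((n\phi(h))^{-\delta/2}\right)\longrightarrow0,$$
by the hypothesis $(n\phi(h))^{-\delta/2}\to0$. Lyapunov's CLT then gives $\sigma_n^{-1}\sqrt{\phi(h)/n}\sum_{i=1}^n(Z_{n,i}-\mathbb{E} Z_{n,i})\stackrel{\mathcal{D}}{\longrightarrow}\mathcal{N}(0,1)$, and combining this with the variance computation and Cram\'er--Wold finishes the argument.

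The computation is essentially bookkeeping; the step I expect to require the most care is the conditioning argument — making rigorous that, on the event $\{d(x,X_1)\le h\}$ that carries all the mass of $\Delta_1^{\star}(x)$, the conditional moments $W_2^{X_1}(\mu)$ and $W_{2+\delta}^{X_1}(\mu)$ may be replaced uniformly by their values at $x$ via the continuity parts of (H4)(ii) — together with confirming that Lemma \ref{lemma1} remains valid for the non-integer exponent $2+\delta$ and that $\mathbb{E} Z_{n,1}=O(1)$, so the mean term neither contributes to the limiting variance nor spoils the moment bound. The rate condition $(n\phi(h))^{\delta/2}\to\infty$ is consumed only in the final estimate of $L_n$.
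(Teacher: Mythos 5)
Your proposal is correct and follows essentially the same route as the paper: Cram\'er--Wold reduction, then Lyapunov's CLT for the row-wise i.i.d.\ centered summands, with the moment bound obtained by conditioning on $X_1$, invoking the continuity of $W_{2+\delta}^{(\cdot)}(\mu)$ from (H4)(ii), and applying Lemma \ref{lemma1}, so that the Lyapunov ratio is $O((n\phi(h))^{-\delta/2})$. The only organizational difference is that the paper isolates the variance identification $\sigma^2(x)=\frac{M_2}{M_1^2 g(x)}\,\ell^t\Sigma^x(\mu)\ell$ in a separate lemma (Lemma \ref{sigma2}), whereas you fold the same computation into the main argument.
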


\begin{proof} of Lemma \ref{l1}. Let's denote by

$$A_{i} = \frac{\sqrt{\phi(h)}}{\mathbb{E}(\Delta_1(x))} \times \mathcal{U}(Y_i-\mu)\; \Delta_i(x) $$
Then
$$\sqrt{n \phi(h)} \left( \nabla_u \widetilde G_n^x(\mu) - \mathbb{E}\left[\nabla_u \widetilde G_n^x(\mu)\right]\right)=
\frac{1}{\sqrt{n}} \sum_{i=1}^n (A_{i} - \mathbb{E}(A_{i}))
:=\frac{1}{\sqrt{n}} \sum_{i=1}^n \widetilde A_{i}. $$ From the
Cramer-Wold device, Lemma \ref{l1} can be proved by finding the
limit distribution of the real variables sequence
$\frac{1}{\sqrt{n}} \sum_{i=1}^n \ell^t\;\widetilde A_{i}$,
 for all $\ell\in\ER^d$ satisfying  $\|l\|\neq0$.

Because the random variables $\ell^t \widetilde A_{1}, \dots,
\ell^t \widetilde A_{n} $ are i.i.d. with zero mean and asymptotic
variance
\begin{eqnarray*}
\sigma^2(x) &=&
\lim_{n\rightarrow\infty}Var\left({\frac{1}{\sqrt{n}} \sum_{i=1}^n
\ell^t\;\widetilde A_{i}}\right).
\end{eqnarray*}
The result may be obtained by applying the Liapounov Central
Theorem Limit. For this propose, we have to prove the following
Lindeberg condition:
 $$\forall \delta > 0 \quad \left[n\;\ell^t\widetilde{\Sigma}^x(\mu)\ell\right]^{-(2+\delta)/2} \sum_{i=1}^n \mathbb{E}|\ell^t \widetilde A_i|^{2+\delta} \longrightarrow 0
 \quad \mbox{as} \quad n \longrightarrow \infty.$$
  It is easy to see that:
 \begin{eqnarray*}
 \left[n\;\ell^t\widetilde{\Sigma}^x(\mu)\ell\right]^{-(2+\delta)/2} \sum_{i=1}^n \mathbb{E}|\ell^t \widetilde A_i|^{2+\delta}
 &=& n^{-\delta/2} \left( \ell^t\Sigma^x(\mu)\ell\right)^{-(2+\delta)/2} \mathbb{E} |\ell^t \widetilde
 A_1|^{2+\delta}.
 \end{eqnarray*}
 Moreover,  using $C_r$ and Jensen inequalities, we obtain
 \begin{eqnarray*}
 \mathbb{E} |\ell^t \widetilde A_1|^{2+\delta} &\leq& c \frac{(\phi(h))^{(2+\delta)/2}}{(\mathbb{E}\Delta_1(x))^{2+\delta}}
 \mathbb{E}\left|\ell^t \left(\mathcal{U}(Y_1-\mu) \right)^{2+\delta} \times \Delta_1^{2+\delta}(x)\right|\\
 \end{eqnarray*}
 \begin{eqnarray*}
 &\leq& c \frac{(\phi(h))^{(2+\delta)/2}}{(\mathbb{E}\Delta_1(x))^{2+\delta}}
 \mathbb{E}\left\{\Delta_1^{2+\delta}(x) \;\underbrace{\mathbb{E}\left[\left|\ell^t\mathcal{U}(Y_1-\mu)
 \right|^{2+\delta}\;|\; X \right]}_{= W_{2+\delta}^X(\mu)} \right\}\\
 &\leq& c \frac{(\phi(h))^{(2+\delta)/2}}{(\mathbb{E}\Delta_1(x))^{2+\delta}}
  \left[\mathbb{E}(\Delta_1(x))^{2+\delta} \sup_{x'\in B(x,h)}
  |W_{2+\delta}^{x'}(\mu) - W_{2+\delta}^x(\mu)| +
  W_{2+\delta}^x(\mu) \mathbb{E}(\Delta_1(x))^{2+\delta} \right].\\
 \end{eqnarray*}
 It follows then, by  hypothesis { (H4)(ii)}  and Lemma
 \ref{lemma1}, that
 \begin{eqnarray*}
 \mathbb{E} |\ell^t \widetilde A_1|^{2+\delta}    &\leq&
 c \frac{(\phi(h))^{(2+\delta)/2}}{(\mathbb{E}\Delta_1(x))^{2+\delta}}
W_{2+\delta}^x(\mu) \mathbb{E}(\Delta_1(x))^{2+\delta} \\
 &\leq& c' \frac{\left(\phi(h) \right)^{(2+\delta)/2}}{\left(\phi(h) \right)^{(2+\delta)} [M_1^{2+\delta} (g(x))^{2+\delta}+o(1)]}
\left[\phi(h) (M_{(2+\delta)/2}g(x) +o(1)) \right]\\
&=&\mathcal{O}\left((\phi(h)^{-\delta/2}) \right).
\end{eqnarray*}

Finally, since $(\ell^t\widetilde{\Sigma}^x(\mu)\ell)^{-(2+\delta)/2}$ is
finite, it comes that

$$\left[n\;\ell^t\widetilde{\Sigma}^x(\mu)\ell\right]^{-(2+\delta)/2} \sum_{i=1}^n \mathbb{E}|\ell^t \widetilde A_i|^{2+\delta}  =
\mathcal{O}\left((n\phi(h)^{-\delta/2}) \right)=o(1),$$

because  $n\phi(h) \rightarrow \infty$ as $n\rightarrow\infty$. This
implies  the Lindeberg condition, which  completes the proof of
the Lemma.
\end{proof}

The following Lemma  gives the analytic expression of the matrix
$\Sigma^x(\mu)$.

\begin{lemma}\label{sigma2} Under conditions (H1)-(H2) and (H4)(ii), we have

$$\sigma^2(x) = \lim_{n\rightarrow\infty}Var\left({\frac{1}{\sqrt{n}} \sum_{i=1}^n \ell^t\;\widetilde A_{i}}\right) =
\frac{M_2}{M_1^2 g(x)} \; \ell^t\Sigma^x(\mu)\ell.$$

\end{lemma}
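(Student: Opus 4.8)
The plan is to exploit the i.i.d.\ structure of the summands. For each fixed $n$ the variables $\widetilde A_1,\dots,\widetilde A_n$ are i.i.d.\ with mean zero, so
\[
\mathrm{Var}\Bigl(\frac{1}{\sqrt n}\sum_{i=1}^n \ell^t\widetilde A_i\Bigr)
= \mathrm{Var}\bigl(\ell^t\widetilde A_1\bigr)
= \mathrm{Var}\bigl(\ell^t A_1\bigr)
= \mathbb{E}\bigl[(\ell^t A_1)^2\bigr] - \bigl(\mathbb{E}[\ell^t A_1]\bigr)^2 ,
\]
and it suffices to pass to the limit in the two scalar expectations on the right-hand side.

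For the second moment, substituting the definition of $A_1$ gives
\[
\mathbb{E}\bigl[(\ell^t A_1)^2\bigr]
= \frac{\phi(h)}{(\mathbb{E}\Delta_1(x))^2}\,
\mathbb{E}\Bigl[\bigl(\ell^t\mathcal{U}(Y_1-\mu)\bigr)^2\Delta_1^2(x)\Bigr].
\]
I would then condition on $X_1$: with $W_2^{x'}(\mu) = \mathbb{E}\bigl[(\ell^t\mathcal{U}(Y-\mu))^2\mid X=x'\bigr]$ (the function from $(H4)(ii)$ with $i=2$, $j=0$), the inner expectation equals $\mathbb{E}\bigl[\Delta_1^2(x)\,W_2^{X_1}(\mu)\bigr]$. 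Since $\Delta_1(x)$ is supported on $\mathcal{B}(x,h)$ and $W_2^{\cdot}(\mu)$ is continuous on $\mathcal{V}_x$, the same splitting used in the proof of Lemma \ref{l1} yields $\mathbb{E}\bigl[\Delta_1^2(x)W_2^{X_1}(\mu)\bigr] = W_2^x(\mu)\,\mathbb{E}[\Delta_1^2(x)] + o\bigl(\mathbb{E}[\Delta_1^2(x)]\bigr)$. By Lemma \ref{lemma1}(i) with $j=2$, $\mathbb{E}[\Delta_1^2(x)] = \phi(h)(M_2 g(x)+o(1))$, and by Lemma \ref{lemma1}(ii) with $k=2$, $(\mathbb{E}\Delta_1(x))^2 = \phi^2(h)(M_1^2 g^2(x)+o(1))$. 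Plugging these in (using $g(x)>0$) gives $\mathbb{E}\bigl[(\ell^t A_1)^2\bigr] = M_2\,W_2^x(\mu)/(M_1^2 g(x)) + o(1)$, and finally $W_2^x(\mu) = \ell^t\,\mathbb{E}\bigl[\mathcal{U}(Y-\mu)\mathcal{U}^t(Y-\mu)\mid X=x\bigr]\,\ell = \ell^t\Sigma^x(\mu)\ell$ by definition of $\Sigma^x(\mu)$.

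For the squared-mean term, using $\|\mathcal{U}(Y_1-\mu)\|\le 1$ one has the crude bound $\bigl|\mathbb{E}[\ell^t A_1]\bigr| \le \|\ell\|\,\sqrt{\phi(h)}\,\mathbb{E}[\Delta_1(x)]/\mathbb{E}\Delta_1(x) = \|\ell\|\sqrt{\phi(h)}$, so $(\mathbb{E}[\ell^t A_1])^2 = O(\phi(h)) = o(1)$ since $\phi(h)\to 0$; alternatively, $\mathbb{E}[A_1] = -\sqrt{\phi(h)}\,\widetilde{\mathcal{B}}_n(x)$ and Proposition \ref{B} gives the sharper $O(\sqrt{\phi(h)}\,h)$. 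Combining the two paragraphs yields
\[
\sigma^2(x) = \lim_{n\to\infty}\Bigl(\mathbb{E}\bigl[(\ell^t A_1)^2\bigr] - \bigl(\mathbb{E}[\ell^t A_1]\bigr)^2\Bigr) = \frac{M_2}{M_1^2 g(x)}\,\ell^t\Sigma^x(\mu)\ell .
\]

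The computation is essentially bookkeeping with the moment asymptotics supplied by Lemma \ref{lemma1}; the only delicate point is the transfer from the conditional second moment $W_2^{X_1}(\mu)$ to its value $W_2^x(\mu)$ at the fixed curve $x$. As in the proof of Lemma \ref{l1}, this requires writing $\mathbb{E}[\Delta_1^2(x)W_2^{X_1}(\mu)] = W_2^x(\mu)\mathbb{E}[\Delta_1^2(x)] + \mathbb{E}\bigl[\Delta_1^2(x)(W_2^{X_1}(\mu)-W_2^x(\mu))\bigr]$, bounding the remainder by $\mathbb{E}[\Delta_1^2(x)]\sup_{x'\in\mathcal{B}(x,h)}|W_2^{x'}(\mu)-W_2^x(\mu)|$, and invoking the continuity in $(H4)(ii)$ to kill the supremum.
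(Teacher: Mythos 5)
Your proof is correct and follows essentially the same route as the paper: reduce to $\mathrm{Var}(\ell^t A_1)$ by the i.i.d.\ structure, condition on $X_1$ to bring in $W_2^{X_1}(\mu)$, transfer to $W_2^x(\mu)$ via the continuity in $(H4)(ii)$, and apply Lemma \ref{lemma1} for the moment ratio $\phi(h)\mathbb{E}(\Delta_1^2)/(\mathbb{E}\Delta_1)^2 \to M_2/(M_1^2 g(x))$. The only difference is that you explicitly verify that the centering term $(\mathbb{E}[\ell^t A_1])^2 = O(\phi(h)) = o(1)$, a point the paper passes over silently; this is a harmless (indeed welcome) addition.
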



\begin{proof} of Lemma \ref{sigma2}. Since the random variables $(\ell^t\widetilde
A_i)_{i=1,\dots,n}$ are i.i.d. with mean zero, it follows that
$$\sigma^2(x) = \lim_{n\rightarrow\infty} Var\left({\frac{1}{\sqrt{n}} \sum_{i=1}^n \ell^t\;\widetilde A_{i}}\right)  =
 \lim_{n\rightarrow\infty} Var(\ell^t \widetilde A_1)= \lim_{n\rightarrow\infty} \mathbb{E}\left((\ell^t A_1)^2 \right).$$
 On the other hand, making use of the properties of conditional expectation one may write
\begin{eqnarray*}
\mathbb{E}\left[\left(\ell^t A_1 \right)^2 \right] &=&
\frac{\phi(h)}{(\mathbb{E}\Delta_1)^2} \;
\mathbb{E}\left[\Delta_1 \ell^t \mathcal{U}(Y_1-\mu)\right]^2
=\frac{\phi(h)}{(\mathbb{E}\Delta_1)^2} \;\mathbb{E}\left[\Delta^2_1   W_2^{X_1}(\mu)  \right]\\
\end{eqnarray*}
Making use of the condition (H4)(ii) and the fact that the functions
$W_2^x (\cdot)$  is bounded, we obtain
\begin{eqnarray*}
\mathbb{E}\left\{\Delta^2_1 W_2^{X_1}(\mu) \right\}  &=&
\mathbb{E}\left(\Delta^2_1\right)\left[ W_2^x(\mu) +\mathcal{O}\left(
\sup_{u\in
\mathbb{R}^d}|W^u_2(\mu)- W^x_2(\mu)| \right)\right] \\
&=& W_2^x(\mu)\mathbb{E}\left(\Delta^2_1\right)
 +o\left( \mathbb{E}\left(\Delta^2_1\right)\right).
\end{eqnarray*}

Using Lemma \ref{lemma1}, one may see that
$$\frac{\phi(h)}{(\mathbb{E}\Delta_1)^2} \mathbb{E}(\Delta_1^2)=\frac{M_2}{M_1^2 g(x)}+o(1).$$

Therefore,

$$\sigma^2(x)=\frac{M_2}{M_1^2 g(x)}W_2^x(\mu)+o(1).$$
\end{proof}


\begin{proof} of Proposition \ref{B}. For each $x \in \mathcal{F}$,
  since $(X_i,Y_i)_{i=1,\dots,n}$ are i.i.d., we
have
$$\widetilde{\mathcal{B}}_n(x) = \mathbb{E}\left[  \nabla_u \widetilde G_n^x(\mu)\right] =
\frac{\mathbb{E}\left[\mathcal{U}(Y_1-\mu) \Delta_1(x) \right]}{\mathbb{E}(\Delta_1(x))}$$
By conditioning with respect to real variable $d(x,X_1)$ and using condition (H5), we have
$$
\widetilde{\mathcal{B}}_n(x) =
 \frac{\mathbb{E}\left[ K\left(\frac{d(x,X_1)}{h}\right) \psi(d(x,X_1))\right]}{\mathbb{E}\left(K\left( \frac{d(x,X_1)}{h}\right) \right)}.
$$
Integration with respect to the distribution of the real variable
$d(x,X_1)$  shows that

$$A_1:=\mathbb{E}\left[ K\left(\frac{d(x,X_1)}{h}\right) \psi(d(x,X_1))\right] = \int_0^1 K(t) \psi(th) dF(th),$$
where $F$ is the cumulative distribution function of the real random variable $d(x,X)$. On the other hand, Taylor series expansion of the function $\psi$
up to the order one in the neighborhood of $t=0$ gives $\psi(th) =
th \nabla\psi(0)+ o_d(h).$ Let us denote by $o_d(1)$ (resp.
$\mathcal{O}_d(1)$) a d-dimensional vector where each component equal to
$o(1)$ (resp. $\mathcal{O}(1)$).

\noindent Therefore, we have
\begin{eqnarray*}
A_1&=& h\nabla\psi(0)\int_0^1 tK(t) dF_x(th) + o_d(h)\int_0^1 K(t) dF(th)\\
&=& h \nabla\psi(0) \left[K(1) F(h) - \int_0^1 (sK(s))' F(sh) ds \right] + o_d(h) \left[K(1) F(h) - \int_0^1 K'(s) F(sh) ds \right].
\end{eqnarray*}

\noindent Using hypothesis $(H2)(i)-(ii)$ we get
\begin{eqnarray*}
A_1 &=& h \nabla \psi (0) K(1) \left(\phi(h) g(x) + o(\phi(h)) \right) - h \nabla \psi(0) \int_0^1 (sK(s))' \left(\phi(sh) g(x) + o(\phi(hs)) \right) ds\\
& & + o(h) K(1) \left(\phi(h) g(x) + o(\phi(h)) \right) - o_d(h) \int_0^1 K'(s) \left(\phi(sh) g(x) + o(\phi(sh)) \right) ds\\
&=& h\phi(h) g(x) \nabla\psi(0) \left[K(1) - \int_0^1 (sK(s))' \left(\tau_0(s) + o(1) \right) ds \right] + h\phi(h) K(1) o_d(1)\\
& & - o_d(h\phi(h))  \int_0^1 K'(s) (\tau_0(s) g(x) + o(1)) ds\\
&=&h\phi(h) g(x) \nabla\psi(0) \left[K(1) - \int_0^1 (sK(s))'  \tau_0(s) ds \right] + O_d^{a.s.}(h\phi(h))
\end{eqnarray*}
\noindent Thus, making use of the Lemma \ref{lemma1}, we obtain
\begin{eqnarray*}
\widetilde{\mathcal{B}}_n(x) = \frac{h\nabla\psi(0)}{M_1} \left[K(1) - \int_0^1 (sK(s))' \tau_0(s) ds + o_{a.s.}(1) \right]
\end{eqnarray*}

\end{proof}

\begin{proof} of Theorem \ref{CLT}

Part (i) follows from Proposition \ref{variance}, decomposition
(\ref{deco}),  Proposition \ref{l1} and Lemma \ref{sigma2}.

Part (ii) follows from Proposition   \ref{B} combined with the
condition $\sqrt{n\phi(h)} h\longrightarrow 0$ as $n\rightarrow
\infty$.

\end{proof}
\begin{proof} of Corollary \ref{TCLP}. Let us denote by $$ T^{x}(\mu) = \left[\Sigma^x(\mu) \right]^{-1/2} H^x(\mu), \quad T_n^{x}(\mu_n) =
\left[\Sigma_n^x(\mu_n) \right]^{-1/2} H_n^x(\mu_n)$$
and $$V_n^{x}(\mu_n)=\frac{M_{1,n}}{\sqrt{M_{2,n}}}\sqrt{nF_{x,n}(h)}\;\;
T_n^{x}(\mu_n)  \left( \mu_{n}-\mu\right).$$

Write
\begin{eqnarray}\label{TCLP.1}
V_n^{x}(\mu_n) &= &\frac{M_{1,n}\sqrt{M_2}}{M_1\sqrt{M_{2,n}}}\;
\sqrt{nF_{x,n}(h) \left(n\phi(h)g(x)\right)^{-1}}\; T_n^{x}(\mu_n)\; \left[T^{x}(\mu)\right]^{-1}
\times \frac{M_1}{\sqrt{M_2}}\sqrt{n\phi(h)g(x)}\;T^{x}(\mu) \left( \mu_{n}-\mu\right)
\nonumber
\\
&:=&V_{n,1}^x\times V_{n,2}^x.
 \end{eqnarray}
Making use of Theorem \ref{CLT} part (ii), the term $V_{n,2}^x$ converges in
distribution to ${\cal N}( 0, I_d)$.

Now to get the result of the corollary it suffices to show that
the first term $V_{n,1}^x$ converges to 1 in probability.
Following the same arguments as in \cite{laib2010} combined with (H1),(H2), one gets \\

$\displaystyle\frac{M_{1,n}\sqrt{M_2}}{M_1\sqrt{M_{2,n}}}\;
\sqrt{nF_{x,n}(h) \left(n\phi(h)g(x)\right)^{-1}} \stackrel{\mathbb{P}}{\longrightarrow} 1$, $M_{1,n} \stackrel{\mathbb{P}}{\longrightarrow} M_{1}$ and $M_{2,n} \stackrel{\mathbb{P}}{\longrightarrow} M_{2}$, as $n \rightarrow\infty$.\\

Now, we have to establish the consistency of $T_n^{x}(\mu_n)$. To
do that, we will  study separately the consistency of each term of
$T_n^{x}(\mu_n)$. Let us start by $H_{n}^x(\mu_{n})$. For this,
write

\begin{eqnarray*}
H_n^x(\mu_{n})-H^x(\mu) &=& \frac{\widetilde{H}_n^x(\mu_{n})}{G_{n,1}^x} - H^x(\mu) \nonumber\\
&=& \frac{\widetilde{H}_n^x(\mu_{n}) - H(\mu)}{G_{n,1}^x} + \frac{H^x(\mu) (1-G_{n,1}^x)}{G_{n,1}^x}.
\end{eqnarray*}
According to Theorem \ref{thm2}, Proposition \ref{variance}, Lemma 5.2 and the fact that the matrix $H^x(\mu)$ is bounded, we can conclude that $H_n^x(\mu_{n})$ converges, in probability, to $H^x(\mu)$.
%

The second term $\Sigma_n^x(\mu_n)$, can be treated  similarly.
Finally, this leads to the convergence in probability of
$T_n^{x}(\mu_n)$ to $T^{x}(\mu)$.
\end{proof}






\end{document}